\documentclass[a4paper,12pt, leqno]{amsart}
\usepackage{amsmath,amsthm,amsfonts,amssymb,graphicx,mathrsfs,esint,longtable,mathtools, enumerate}
\usepackage[colorlinks=true, linkcolor=red, urlcolor=red, citecolor=green]{hyperref}
\usepackage{cleveref}
\usepackage{color}

\newcommand{\R}{{\mathbb R}}

\def\XXint#1#2#3{{\setbox0=\hbox{$#1{#2#3}{\int}$ }
\vcenter{\hbox{$#2#3$ }}\kern-.6\wd0}}

\newcommand{\mres}{\mathbin{\vrule height 1.6ex depth 0pt width
0.13ex\vrule height 0.13ex depth 0pt width 1.3ex}}

\newcommand{\B}{\mathbb{B}}

\newcommand{\Z}{\mathbb{Z}}

\newcommand{\Ha}{\mathcal{H}}
\newcommand{\modd}{\textnormal{mod}}
\newcommand{\capp}{\textnormal{cap}}

\newcommand{\id}{\text{id}}

\newtheorem{theorem}{\textbf{THEOREM}}[section]
\newtheorem{lemma}[theorem]{\textsc{Lemma}}
\newtheorem{proposition}[theorem]{\textsc{Proposition}}
\theoremstyle{definition}
\newtheorem{question}[theorem]{\textsc{Question}}
{\theoremstyle{remark} }
\def\charfn_#1{{\raise1.2pt\hbox{$\chi_{\kern-1pt\lower3pt\hbox{{$\scriptstyle#1$}}}$}}}
\def\leq{\leqslant }
\def\geq{\geqslant }

\def\XXint#1#2#3{{\setbox0=\hbox{$#1{#2#3}{\int}$}
\vcenter{\hbox{$#2#3$}}\kern-.5\wd0}}

\begin{document}

\title{On the duality of moduli in arbitrary codimension}
\author{Atte Lohvansuu} 
\let\thefootnote\relax\footnote{\emph{Mathematics Subject Classification 2010:} Primary 30L10, Secondary 30C65, 28A75,
51F99.}
\thanks{The author was supported by the Academy of Finland, grant no. 308659, and also by the Vilho, Yrj\"o and Kalle V\"ais\"al\"a foundation.}
\begin{abstract} We study the duality of moduli of $k$- and $(n-k)$-dimensional slices of euclidean $n$-cubes, and establish the optimal upper bound 1. 
\end{abstract}
\maketitle
\section{Introduction and the main result}
Suppose $D\subset\R^2$ is a Jordan domain, whose boundary is divided into four segments $\zeta_1, \ldots, \zeta_4$, in cyclic order. Let $\Gamma(\zeta_1, \zeta_3; D)$ be the family of all paths of $D$ that connect $\zeta_1$ and $\zeta_3$. Then for every $1<p<\infty$
\begin{equation}\label{eq:polku1}
(\modd_p\Gamma(\zeta_1, \zeta_3; D))^{1/p}(\modd_{q}\Gamma(\zeta_2, \zeta_4; D))^{1/q}=1.
\end{equation}
Here $q=\frac{p}{p-1}$ and the $p$-modulus of a path family $\Gamma$ is defined by
\[
\modd_p\Gamma=\inf_\rho\int_D\rho^p\, d\Ha^2,
\]
where the infimum is taken over all positive Borel-functions $\rho$ with
\[
\int_\gamma\rho\, ds\geq 1
\]
for every locally rectifiable path $\gamma\in\Gamma$. 

For conformal moduli, that is $p=2=q$, the duality \eqref{eq:polku1} was already known to Beurling and Ahlfors, see e.g. \cite[Lemma 4]{AhlforsBeurling1950} and \cite[Ch. 14]{AhlforsSarioRS}, although instead of moduli they considered their reciprocals, called  \emph{extremal lengths}. For general $p$ the identity \eqref{eq:polku1} follows from the results of \cite{Ziemer1967}. It has found applications in connection with uniformization theorems \cite{Rajala2017, Ikonen2019} and Sobolev extension domains \cite{Zhang2020}.  

The \emph{duality of moduli} phenomenon \eqref{eq:polku1} is also present in euclidean spaces \cite{FreedmanHe1991, Gehring1962, Ziemer1967} of higher dimension and sufficiently regular metric spaces \cite{JonesLahti2019, Lohvansuu2020, LohvansuuRajala2018}. For example, in \cite{Ziemer1967} it is shown that
\begin{equation}\label{eq:polku2}
(\modd_p\Gamma(E, F; G))^{1/p}(\modd_{q}\Gamma^*(E, F; G))^{1/q}=1,
\end{equation}
where $G\subset\R^n$ is open and connected, $E$ and $F$ are disjoint, compact and connected subsets of $G$ and $\Gamma^*(E, F; G)$ is the set of all compact sets of $G$ that separate $E$ from $F$. The modulus of separating sets is a natural generalization of the definition of the path modulus. See Section \ref{section:preli} for definitions of moduli and other concepts appearing in the introduction.

Separating sets are generally of codimension $1$, so \eqref{eq:polku1} and \eqref{eq:polku2} deal with objects of either dimension or codimension 1. In fact, this is a common theme in all of the results cited above. However, an observation by Freedman and He (see the discussion after Theorem 2.5 in \cite{FreedmanHe1991}) hints that a similar duality result could be true for objects of higher (co)dimension as well. In this paper we explore this question in the setting of cubes of $\R^n$. 

Moduli of higher (co)dimensional objects have appeared in \cite{HeinonenWu2010, PankkaWu2014}, where the nonexistence of quasisymmetric parametrizations of certain spaces was established. Indeed, one of the main motivations for studying more general moduli is finding tools to approach parametrization problems in higher dimensions.

Our first problem is defining suitable classes of $k$- and $(n-k)$-dimensional objects, since simple descriptions such as ``connecting paths" or ``separating surfaces" do not seem to exist. We follow \cite{FreedmanHe1991} and define the objects as representatives of certain relative homology classes. For example, in the context of \eqref{eq:polku1} we can think of the paths of $\Gamma(\zeta_1, \zeta_3; D)$ as singular relative cycles, that are representatives of either generator of $H_1(D, \zeta_1\cup\zeta_3)\simeq\Z$. Since we also want to integrate over the chains, we need to assume some regularity. For this reason we will consider Lipschitz chains instead of singular chains.

Let $Q\subset \R^n$ be a compact set homeomorphic to the closed unit $n$-cube $I^n$. Fix a homeomorphism $h: Q\rightarrow I^n$ and an integer $0<k<n$ and let 
\[
A=h^{-1}(\partial I^k\times I^{n-k})\text{ and } B=h^{-1}(I^k\times\partial I^{n-k}).
\] 
Then $A$ and $B$ are $(n-1)$-dimensional submanifolds of $\partial Q$ with $\partial Q=A\cup B$ and $\partial A=A\cap B=\partial B$. We assume that $A, B$ and $Q$ are locally Lipschitz neighborhood retracts. This includes triples $(Q, A, B)$ that are smooth or polygonal, and cubes that are images of the standard cube under biLipschitz automorphisms of $\R^n$.

We denote the Lipschitz homology groups by $H^L_*$. We consider only groups with integer coefficients. This notation should not be confused with the Hausdorff measures, which are denoted by $\Ha^*$. Note that
\[
H_k^L(Q, A)\simeq\Z\simeq H^L_{n-k}(Q, B),
\]
since the same is true for singular homology, and the two homology theories are equivalent for pairs of locally Lipschitz retracts (see Lemma \ref{lemma:liphomo}). 

Let $\Gamma_A$ (resp. $\Gamma_B$) be the collection of the images of relative Lipschitz $k$-cycles of $Q-B$ that generate $H_k^L(Q, A)$ (($n-k$)-cycles of $Q-A$ that generate $H_{n-k}^L(Q, B)$). Define
\[
\modd_p\Gamma_A:=\inf_\rho\int_Q\rho^p\, d\Ha^n,
\]
where the infimum is taken over positive Borel-functions $\rho$, for which
\[
\int_S\rho\, d\Ha^k\geq 1
\]
for every $S\in\Gamma_A$. The moduli $\modd_p\Gamma_B$ are defined analogously. In this paper we will prove the following upper bound.
\begin{theorem}\label{thm:main} 
For every $1<p<\infty$
\[
(\modd_p\Gamma_A)^{1/p}(\modd_{q}\Gamma_B)^{1/q}\leq 1,
\]
where $q=\frac{p}{p-1}$.
\end{theorem}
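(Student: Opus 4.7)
The plan is to exhibit an admissible pair $(\rho,\sigma)$ for $(\Gamma_A,\Gamma_B)$ satisfying $\|\rho\|_p\|\sigma\|_q\leq 1$, built from a Lipschitz ``coordinate map'' $\Phi=(v,u)\colon Q\to I^k\times I^{n-k}=I^n$ whose component fibers represent the generator classes in Lipschitz homology.  Using the Lipschitz neighborhood retract hypothesis on $(Q,A,B)$ and the topological identification $h$, I would first construct such a $\Phi$ with $\Phi(A)\subseteq\partial I^k\times I^{n-k}$, $\Phi(B)\subseteq I^k\times\partial I^{n-k}$, inducing the canonical isomorphism on relative Lipschitz homology via Lemma \ref{lemma:liphomo}.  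Consequently, for a.e.\ $z\in I^{n-k}$ the fiber $u^{-1}(z)$ represents the generator of $H_k^L(Q,A)$ and lies in $\Gamma_A$, and symmetrically $v^{-1}(y)\in\Gamma_B$ for a.e.\ $y\in I^k$.  The construction proceeds by defining $v$ and $u$ on Lipschitz neighborhoods of $A\cup B$ via the retractions composed with the standard projections on the model cube, then Lipschitz-extending into the convex target $I^n$.

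\emph{Admissible test family.}  For weight functions $f,g\geq 0$ with $\int f=\int g=1$, set $\rho(x):=f(v(x))\,J_k v(x)$ and $\sigma(x):=g(u(x))\,J_{n-k} u(x)$, where $J_k v$ and $J_{n-k} u$ are the respective Jacobians.  For $S\in\Gamma_A$, Poincar\'e--Lefschetz duality forces the intersection count $\#(S\cap v^{-1}(y))\geq 1$ for a.e.\ $y$; applying the area formula to $v|_S\colon S\to I^k$ together with the pointwise bound $J_k(v|_S)\leq J_k v$ on $S$ then yields
\[
\int_S \rho\,d\Ha^k\geq\int_S f(v)\,J_k(v|_S)\,d\Ha^k=\int_{I^k} f(y)\,\#(S\cap v^{-1}(y))\,dy\geq 1,
\]
so $\rho$ is admissible for $\Gamma_A$; a symmetric argument gives admissibility of $\sigma$ for $\Gamma_B$.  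The coarea formula for $v$ rewrites $\|\rho\|_p^p=\int_{I^k} f^p F$ with $F(y):=\int_{v^{-1}(y)} J_k v^{p-1}\,d\Ha^{n-k}$, and analogously $\|\sigma\|_q^q=\int_{I^{n-k}} g^q G$ with $G(z):=\int_{u^{-1}(z)} J_{n-k} u^{q-1}\,d\Ha^k$.  Optimizing over $f,g$ by H\"older produces the minima $\|\rho\|_p=(\int F^{-q/p})^{-1/q}$ and $\|\sigma\|_q=(\int G^{-p/q})^{-1/p}$, attained at $f\propto F^{-q/p}$ and $g\propto G^{-p/q}$.

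\emph{Closing inequality and main obstacle.}  The theorem then reduces to the geometric inequality
\[
\Bigl(\int_{I^k} F^{-q/p}\Bigr)^{1/q}\Bigl(\int_{I^{n-k}} G^{-p/q}\Bigr)^{1/p}\geq 1,
\]
which I expect to obtain by combining the area-formula bound $\int_Q J_n\Phi\,d\Ha^n\geq|\deg\Phi|\geq 1$ with the Hadamard inequality $J_n\Phi\leq J_k v\cdot J_{n-k} u$ and two nested applications of H\"older along the fiber decompositions of $v$ and $u$; one checks in the standard cube (where $F=G=1$) that equality holds, confirming the sharp constant.  The main obstacles are (i) constructing $\Phi$ with the correct fiber-homological structure from only the Lipschitz neighborhood retract hypothesis---since $h$ itself need not be Lipschitz, this requires careful Lipschitz extensions combined with a degree-theoretic argument in Lipschitz homology---and (ii) closing the two-variable H\"older chain, possibly preceded by a smoothing/approximation step to ensure that $\Phi$ is nondegenerate wherever needed.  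The rest is a clean coarea-plus-H\"older computation in the spirit of the classical codimension-one duality on the fibers.
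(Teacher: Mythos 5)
Your approach does not follow the paper's proof, and it has a fatal gap at the closing step. The inequality you reduce the theorem to, namely
\[
\Bigl(\int_{I^k}F^{-q/p}\Bigr)^{1/q}\Bigl(\int_{I^{n-k}}G^{-p/q}\Bigr)^{1/p}\geq 1,
\]
is simply false for a general admissible $\Phi=(v,u)$, and no combination of the Hadamard inequality and H\"older can rescue it, because $F$ and $G$ are integrals over fibers of two \emph{different} maps and never meet in a single pointwise estimate. Concretely, take $n=2$, $k=1$, let $Q$ be the sheared square $\{(x_1,x_2+\alpha x_1):(x_1,x_2)\in I^2\}$ with $A$ the two slanted sides and $B$ the top and bottom, and let $\Phi(y_1,y_2)=(y_1,\,y_2-\alpha y_1)$ be the (linear, bijective, boundary-respecting) unshearing map. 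Then $J_1v\equiv 1$, so $F\equiv 1$ and $\int F^{-q/p}=1$, while $J_1u\equiv\sqrt{1+\alpha^2}$ and each fiber of $u$ has length $\sqrt{1+\alpha^2}$, so $G\equiv(1+\alpha^2)^{q/2}$ and $\int G^{-p/q}=(1+\alpha^2)^{-p/2}<1$. Your optimized test pair therefore yields $\|\rho\|_p\|\sigma\|_q=(1+\alpha^2)^{1/2}>1$, which proves nothing (the true product is $1$ here by the classical planar duality). What your scheme actually requires is an \emph{extremal} $\Phi$, i.e.\ one realizing the capacity $\capp_p\Gamma_A$ discussed in Section \ref{section:remarks}; the existence of such maps, and the identity $\modd_p\Gamma_A=\capp_p\Gamma_A$ for $k>1$, is precisely the open problem the paper records there. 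A secondary but real issue is step (i): from the hypotheses one does not easily get a Lipschitz $\Phi$ whose level sets are images of Lipschitz relative \emph{cycles} generating the homology (a level set of a Lipschitz map is merely a rectifiable set); the paper deliberately introduces the larger dual family $\Gamma_A^*$ of rectifiable sets defined by a homological separation property exactly to sidestep this.

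For contrast, the paper's proof is asymmetric and avoids constructing any global coordinate map. It fixes one family $\Gamma_A$, introduces the dual family $\Gamma_A^*\supset\Gamma_B$ of rectifiable sets $S^*$ killing the class under $i_*:H_k^L(Q-S^*,A)\to H_k^L(Q,A)$, proves via intersection numbers and the homotopy formula that every small translate $S_z$ of $S\in\Gamma_A^\delta$ meets every $S^*\in\Gamma_A^{*\delta}$, and deduces from the coarea formula applied to $(x,y)\mapsto x-y$ on $S^*\times S$ that the mollification $\phi_\varepsilon^{S_z}$ of $\Ha^k\mres S_z$ is admissible for $\Gamma_A^{*\delta}$. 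Feeding this into the variational characterization of the extremal function $\rho$ of $\modd_q\Gamma_A^{*\delta}$ (Lemma \ref{lemma:modulilemma}~$ii)$) shows that $\rho^{q-1}/\modd_q\Gamma_A^{*\delta}$ is weakly admissible for $\Gamma_A^\delta$, which rearranges to the claimed product bound. If you want to salvage your symmetric strategy, you would first have to solve the extremal-map problem; otherwise I recommend redoing the argument along the paper's convolution--coarea--extremal-function route.
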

It is unknown, whether Theorem \ref{thm:main} holds with an equality. We will prove Theorem \ref{thm:main} in Section \ref{sec:proof}. A similar result for de Rham cohomology classes, with an equality, is proved in the setting of Riemannian manifolds in pages 212-213 of \cite{FreedmanHe1991}.

The assumption on $Q$, $A$ and $B$ being locally Lipschitz neighborhood retracts can be relaxed. The proof of Theorem \ref{thm:main} only requires that there exists a pair of Lipschitz chains that generate $H_k(Q, A)$ and $H_{n-k}(Q, B)$. The assumption on retracts was chosen for its simplicity and its use in \cite{GMT}. It is also likely that such minimal assumptions on the upper bound of Theorem \ref{thm:main} are not sufficient for the corresponding lower bound. We will discuss the lower bound in Section \ref{section:remarks}.

In light of the results of \cite[Ch. 4]{GMT}, it would be interesting to know whether analogues of Theorem \ref{thm:main} hold for homology classes of integral currents. 
\section{Definitions}\label{section:preli}
\subsection{Lipschitz homology}
Let us recall the definition and basic properties of the integral homology groups. See e.g. \cite{DoldAT, HatcherAT} or other texts on basic algebraic topology for more comprehensive treatment.

 For an integer $k\geq 0$ the \emph{standard $k$-simplex} $\Delta_k$ is the convex hull of the standard unit vectors $e_0, \ldots, e_k$ of $\R^{k+1}$. Given a metric space $(X, d)$, a \emph{singular $k$-simplex} is a continuous map from $\Delta_k$ to $X$. Finite formal linear combinations
\[
\sigma=\sum_ik_i\sigma_i
\]
of singular $k$-simplices $\sigma_i$ with integer coefficients $k_i$ are called \emph{singular $k$-chains}. Singular $k$-chains of $X$ form a free abelian group denoted by $C_k(X)$. The \emph{boundary} $\partial\sigma$ of a singular $k$-simplex $\sigma$ is the singular $(k-1)$-chain
\[
\partial\sigma=\sum_{i=0}^k(-1)^i\sigma\circ F^i_k,
\]
where $F^i_k: \Delta_{k-1}\rightarrow \Delta_k$ is the unique linear map that maps each $e_j$ to $e_j$ for $j<i$ and to $e_{j+1}$ for $j\geq i$. For singular $0$-simplices we set $\partial\sigma=0$. The boundary defines a collection of homomorphisms $\partial: C_k(X)\rightarrow C_{k-1}(X)$, all denoted by the same symbol $\partial$. Then $\partial\partial=0$. 

The \emph{image} of a singular $k$-simplex $\sigma$ is the compact set $|\sigma|=\sigma(\Delta_k)$. The image of a $k$-chain $\sigma=\sum_ik_i\sigma_i$ is the compact set $|\sigma|=\bigcup_i|\sigma_i|$.

Given a subspace $Y\subset X$, we identify each singular simplex $\sigma$ of $Y$ with the singular simplex $i_Y\circ\sigma$ of $X$, where $i_Y: Y\hookrightarrow X$ is the inclusion map. We define the groups of \emph{relative chains} by
\[
C_k(X, Y):=\frac{C_k(X)}{C_k(Y)},
\]
with the convention $C_k(X, \emptyset)=C_k(X)$. The boundary map induces homomorphisms $\partial: C_k(X, Y)\rightarrow C_{k-1}(X, Y)$, which are again denoted by the same symbol. A chain $\sigma\in C_k(X)$ is called a \emph{cycle} relative to $Y$, if $\partial\sigma\in C_{k-1}(Y)$, or simply a relative cycle if the choice of $Y$ is clear from the context. Similarly, $\sigma$ is called a relative \emph{boundary} if $\sigma=\partial\sigma'+\sigma''$, where $\sigma'\in C_{k+1}(X)$ and $\sigma''\in C_k(Y)$. 

The \emph{singular relative homology groups} of the pair $(X, Y)$ are the quotient groups
\[
H_k(X, Y):=\frac{\mathrm{ker}(\partial: C_k(X, Y)\rightarrow C_{k-1}(X, Y))}{\mathrm{im}(\partial: C_{k+1}(X, Y)\rightarrow C_k(X, Y))}.
\]
The homology groups of $X$ are the groups $H_k(X):=H_k(X, \emptyset)$. The homology class of a (relative) chain $\sigma$ is denoted by $[\sigma]$. The homology classes of $H_k(X, Y)$ are represented by relative $k$-cycles, and two relative $k$-cycles define the same class if and only if their difference is a relative boundary.

If $X'$ is another metric space with a subset $Y'$, and $f: X\rightarrow X'$ is a continuous map with $f(Y)\subset Y'$, we denote by $f_*$ the induced homomorphisms $f_*: C_k(X, Y)\rightarrow C_k(X', Y')$, and also the homomorphisms $f_*: H_k(X, Y)\rightarrow H_k(X', Y')$. These are given by $f_*\sigma=f\circ\sigma$ for singular simplices, $f_*\sum_ik_i\sigma_i=\sum_ik_if_*\sigma_i$ for chains and $f_*[\sigma]=[f_*\sigma]$ for homology classes.

Given a continuous homotopy $H: X\times I\rightarrow X'$ with $H(Y\times I)\subset Y'$, there exists a sequence of homomorphisms 
\[
P: C_k(X, Y)\rightarrow C_{k+1}(X', Y'),
\]
such that
\begin{equation}\label{eq:homotopyformula}
H_{1*}-H_{0*}=P\partial+\partial P.
\end{equation}
Here $H_t(x)=H(x, t)$. Formula \eqref{eq:homotopyformula} is called the \emph{homotopy formula}.

A continuous $f: X\rightarrow Y$ is called a \emph{retraction} if $f\circ i_Y=\id_Y$. The set $Y$ is then called a \emph{retract} of $X$. If $Y$ is a retract of one if its neighborhoods in $X$, it is called a \emph{neighborhood retract}.

The corresponding objects in the Lipschitz category are obtained by replacing each occurrence of ``singular" or ``continuous" with ``Lipschitz". The homotopies involved in these definitions are then required to be Lipschitz with respect to the metric $d((x, t), (x', t'))=d(x, x')+|t-t'|$. We denote the groups of Lipschitz chains by $C_*^L(X, Y)$ and the Lipschitz homology groups by $H_*^L(X, Y)$. We define \emph{locally} Lipshitz objects similarly. However, due to compactness there is often no difference between the corresponding objects of Lipschitz and locally Lipschitz categories. 

\begin{lemma}\label{lemma:liphomo}
Let $Y\subset X\subset\R^n$ be locally Lipschitz neighborhood retracts. Then the inclusions
\[
i: C_*^L(X, Y)\hookrightarrow C_*(X, Y)
\]
induce isomorphisms on homology.
\end{lemma}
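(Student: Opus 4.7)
The plan is to construct a ``Lipschitzification'' chain map $T\colon C_*(X,Y)\to C_*^L(X,Y)$ together with chain homotopies witnessing $i\circ T\simeq\id$ on $C_*(X,Y)$ and $T\circ i\simeq\id$ on $C_*^L(X,Y)$, whence $T_*$ will be a two-sided inverse of $i_*$ on homology. The key geometric input is the existence of locally Lipschitz retractions $r_X\colon V\to X$ and $r_Y\colon W\to Y$, where $V\supset X$ and $W\supset Y$ are open neighborhoods in $\R^n$; after shrinking, we may also assume $W\subset V$ and $r_X(W)\subset W$, so that $r_X$ preserves the subspace $Y$.

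Given a continuous singular simplex $\sigma\colon\Delta_k\to X$, the compactness of $\sigma(\Delta_k)$ and the local Lipschitz property of $r_X$ yield a $\delta>0$ such that the closed $\delta$-neighborhood of $\sigma(\Delta_k)$ lies in $V$ and $r_X$ is globally Lipschitz there. Choose a barycentric subdivision of $\Delta_k$ fine enough that $\sigma$ oscillates by less than $\delta$ on every subsimplex, let $\sigma'$ be the piecewise-affine interpolant of $\sigma$ on the vertices of the subdivision, and set $T\sigma:=r_X\circ\sigma'$; this is a Lipschitz simplex in $X$. The straight-line homotopy $H(x,t)=(1-t)\sigma(x)+t\sigma'(x)$ stays inside the $\delta$-neighborhood, so $r_X\circ H$ is a continuous homotopy in $X$ from $\sigma$ to $T\sigma$. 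Crucially, if $\sigma$ is itself Lipschitz then $H$ and hence $r_X\circ H$ is Lipschitz, which is what supplies the Lipschitz chain homotopy needed for $T\circ i\simeq\id$.

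To upgrade this pointwise recipe to a chain map, I would carry out the construction by induction on the simplex dimension, arranging that the approximation of $\sigma$ restricted to any face $\sigma\circ F^i_k$ coincides with the approximation of that face constructed at the previous stage; such a face-compatible piecewise-affine extension always exists on the convex simplex $\Delta_k$. Singular simplices with image in $Y$ are approximated by the same procedure with $r_Y$ in place of $r_X$, ensuring $T(C_*(Y))\subset C_*^L(Y)$ and hence that $T$ descends to relative chains. Assembling the simplex-wise homotopies by the standard prism construction underlying the homotopy formula \eqref{eq:homotopyformula} produces the two required chain homotopies. Alternatively, once the absolute isomorphisms $H_*^L(X)\to H_*(X)$ and $H_*^L(Y)\to H_*(Y)$ are in hand, the five lemma applied to the long exact sequences of the Lipschitz and singular pairs immediately yields the relative statement.

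The main obstacle is combinatorial rather than geometric: one must execute the face-compatible inductive approximation while simultaneously preserving the subspace $Y$ and producing a natural chain map. The underlying geometry—that a continuous map into $X$ can be perturbed into the neighborhood $V$ and then retracted—is elementary; what requires care is verifying that the simplex-by-simplex approximations truly assemble into a chain homotopy equivalence between $C_*(X,Y)$ and $C_*^L(X,Y)$.
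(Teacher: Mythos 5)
The paper does not prove this lemma directly: it invokes \cite[Cor.~11.1.2]{Riedweg2011}, which gives the singular--Lipschitz comparison for pairs of locally Lipschitz contractible spaces, and merely remarks that locally Lipschitz neighborhood retracts are locally Lipschitz contractible. Your direct construction is therefore a genuinely different route, and its geometric core --- piecewise-affine interpolation of $\sigma$ on a fine subdivision, followed by the retraction, with the straight-line homotopy pushed into $X$ by $r_X$ --- is exactly the right mechanism; it is essentially what the cited reference does. The five-lemma reduction of the relative statement to the two absolute ones is also correct and is the cleanest way to handle the pair.

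There is, however, one step that would fail as written: the global, face-compatible chain map $T$ with $T(C_*(Y))\subset C_*^L(Y)$. First, $r_X$ does not map a neighborhood of $Y$ into $Y$ (it only fixes $Y$ pointwise), so a simplex with image in $Y$ must be retracted by $r_Y$ (or $r_Y\circ r_X$); but then a simplex of $X$ not contained in $Y$ whose faces do lie in $Y$ receives the $r_X$-treatment while its faces received the $r_Y$-treatment, and the two approximations of the common face need not agree --- the chain map property breaks precisely at the $X$/$Y$ interface. Second, even in the absolute case, each singular simplex occurs as a face of infinitely many others, each demanding its own fineness $\delta$, so the inductive face-compatible choice is delicate. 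Both problems disappear if you abandon the global chain map and argue cycle by cycle: a relative cycle involves only finitely many simplices, whose images form a compact set, so one $\delta$ and one subdivision depth suffice for all of them simultaneously, and the prism operator of \eqref{eq:homotopyformula} applied to the Lipschitz homotopy $H(x,t)=(1-t)x+tr_X(x)$ (and its $r_Y$-analogue near $Y$) gives surjectivity and injectivity of $i_*$ directly. This cycle-at-a-time version of your argument is sound and is essentially the proof sketched (but not printed) by the author; combined with your five-lemma reduction it completes the lemma.
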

Lemma \ref{lemma:liphomo} follows from a more general result \cite[Cor. 11.1.2]{Riedweg2011}, which holds for pairs of \emph{locally Lipschitz contractible} metric spaces. It is straightforward to show that the existence of locally Lipschitz neighborhood retractions implies locally Lipschitz contractibility.

\subsection{Modulus} Given a $1<p<\infty$ and a family $\mathcal{M}$ of Borel measures of $\R^n$, the \emph{$p$-modulus} of $\mathcal{M}$ is the number
\begin{equation}\label{eq:modulidefinition}
\modd_p\mathcal{M}:=\inf_{\rho}\int_{\R^n}\rho^p\, d\Ha^n,
\end{equation}
where the infimum is taken over all Borel functions $\rho: \R^n\rightarrow [0, \infty)$ with
\begin{equation}\label{eq:moduliehto}
\int_{\R^n}\rho\, d\nu\geq 1
\end{equation}
for every $\nu\in\mathcal{M}$. Such functions are called \emph{admissible} for $\mathcal{M}$. If there exists a subfamily $\mathcal{N}\subset\mathcal{M}$ such that $\modd_p\mathcal{N}=0$ and \eqref{eq:moduliehto} holds for all $\nu\in\mathcal{M}-\mathcal{N}$, we say that $\rho$ is \emph{$p$-weakly admissible} or simply \emph{weakly admissible} if the choice of $p$ is clear from the context. It follows that the infimum in \eqref{eq:modulidefinition} does not change if we take it over $p$-weakly admissible functions instead. Let us list some useful properties of the modulus. 
\begin{lemma}\label{lemma:modulilemma}
Let $\mathcal{M}$ be a collection of Borel measures of $\R^n$. Let $1<p<\infty$.
\begin{enumerate}[i)]
\item If $\rho_i$ are $p$-integrable Borel functions that converge to a function $\rho$ in $L^p$, there exists a subsequence $(\rho_{i_j})_j$ for which 
\[
\int_{\R^n}\rho_{i_j}\, d\nu\overset{j\rightarrow\infty}{\longrightarrow}\int_{\R^n}\rho\, d\nu
\]
for almost every $\nu\in\mathcal{M}$. In particular, Borel representatives of $L^p$-limits of admissible functions are weakly admissible.
\item If $\modd_p\mathcal{M}<\infty$, then
\[
\modd_p\mathcal{M}=\int_{\R^n}\rho^p\, d\Ha^n
\]
for a weakly admissible minimizer $\rho$, unique up to sets of $\Ha^n$-measure zero. Moreover,
\[
\modd_p\mathcal{M}\leq \int_{\R^n}\phi\rho^{p-1}\, d\Ha^n
\]
for any other $p$-integrable weakly admissible $\phi$.
\item If $\mathcal{M}=\bigcup_{i=1}^\infty\mathcal{M}_i$ with $\mathcal{M}_i\subset\mathcal{M}_{i+1}$ for all $i$, then
\[
\modd_p\mathcal{M}=\lim_{i\rightarrow\infty}\modd_p\mathcal{M}_i.
\]
\end{enumerate}
\end{lemma}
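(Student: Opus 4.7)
My plan is to treat the three parts in order, with (i) supplying the key tool that will also be used in (ii) and (iii).

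For part (i) I would run a Fuglede-type argument. Extract a subsequence with $\|\rho_{i_j}-\rho\|_p \leq 2^{-j}$, set $g_j := |\rho_{i_j}-\rho|$, and let $h := \sum_j g_j$; Minkowski puts $h$ in $L^p$. Any $\nu$ with $\int h\, d\nu < \infty$ satisfies $\sum_j \int g_j\, d\nu < \infty$ and hence $\int g_j\, d\nu \to 0$, so the exceptional family
\[ \mathcal{N} := \{\nu \in \mathcal{M} : \textstyle\int g_j\, d\nu \not\to 0 \} \]
is contained in $\{\nu : \int h\, d\nu = \infty\}$. Since $ch$ is admissible for the latter family for every $c > 0$, its $p$-modulus is at most $c^p \|h\|_p^p$; letting $c\to 0$ gives $\modd_p \mathcal{N}=0$. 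Applied to a sequence of admissible functions this yields weak admissibility of the $L^p$-limit, and will be the route by which all later limits inherit weak admissibility.

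For part (ii) I would use the uniform convexity of $L^p$. Let $(\rho_i)$ be a minimizing sequence of admissible functions with $\int \rho_i^p \to M := \modd_p \mathcal{M}$. Since admissibility is closed under convex combinations, $\|\tfrac12(\rho_i+\rho_j)\|_p^p \geq M$, and Clarkson's inequality forces $(\rho_i)$ to be Cauchy in $L^p$. By (i) the $L^p$-limit $\rho$ is weakly admissible, and $\|\rho\|_p^p = M$, so $\rho$ is a minimizer; the same convexity argument applied to two hypothetical minimizers gives uniqueness. For the variational inequality, given a $p$-integrable weakly admissible $\phi$, the convex combination $\rho_t := (1-t)\rho + t\phi$ is weakly admissible for every $t \in [0,1]$ because the union of the two modulus-zero exceptional families still has modulus zero. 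Hence $t \mapsto \int \rho_t^p\, d\Ha^n$ is minimized at $t=0$, and differentiating at $t=0^+$ (justified by the dominator $p(\rho+\phi)^p \in L^1$, with $\phi\rho^{p-1} \in L^1$ by H\"older) yields
\[ 0 \leq \int (\phi - \rho)\rho^{p-1}\, d\Ha^n, \]
which rearranges to the claimed bound.

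For part (iii) the inequality $\lim_i \modd_p \mathcal{M}_i \leq \modd_p \mathcal{M}$ is immediate from monotonicity. For the reverse, I may assume $M_\infty := \lim_i \modd_p \mathcal{M}_i$ is finite and apply (ii) to obtain a weakly admissible minimizer $\rho_i$ of $\mathcal{M}_i$. For $j \geq i$ the function $\rho_j$ is weakly admissible for $\mathcal{M}_i$ as well, so $\|\tfrac12(\rho_i+\rho_j)\|_p^p \geq M_i$; together with $\|\rho_i\|_p^p = M_i$, $\|\rho_j\|_p^p = M_j$ and $M_i \to M_\infty$, Clarkson's inequality forces $(\rho_i)$ to be Cauchy in $L^p$. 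By (i) the limit $\rho$ is weakly admissible for $\bigcup_i \mathcal{M}_i = \mathcal{M}$ and satisfies $\|\rho\|_p^p = M_\infty$, giving $\modd_p \mathcal{M} \leq M_\infty$. The main obstacle throughout is the careful bookkeeping of the modulus-zero exceptional families that accumulate in each step: every appeal to (ii) produces one, and (i) produces another on each passage to a subsequential limit, so one has to invoke countable subadditivity of $\modd_p$ to verify that the aggregated exceptional family remains modulus-zero. Once this accounting is set up, every remaining step is a routine application of $L^p$-convexity.
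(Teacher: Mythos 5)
Your proposal is correct and follows essentially the same route as the paper, which does not write out a proof but delegates to the classical arguments you reproduce: Fuglede's exceptional-set argument for (i), uniform convexity (Clarkson) for existence and uniqueness of the minimizer, the first-variation computation for the inequality in (ii), and the exhaustion-by-minimizers argument of Ziemer for (iii). Your explicit bookkeeping of the accumulated modulus-zero exceptional families via countable subadditivity is exactly the point that makes these standard proofs go through in the weakly admissible setting.
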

Claim $i)$ is often referred to as \emph{Fuglede's lemma}. Proofs for $i)$ and the first part of $ii)$ can be found in \cite[Thm. 3]{Fuglede1957}. The second part of $ii)$ and $iii)$ are generalizations of \cite[Lemma 5.2]{LohvansuuRajala2018} and \cite[Lemma 2.3]{Ziemer1969}, respectively. The same proofs apply.

In this paper we abbreviate 
\[
\modd_p\Gamma_A=\modd_p\{\Ha^k\mres S\ |\ S\in\Gamma_A\},
\]
and 
\[
\modd_q\Gamma_B=\modd_q\{\Ha^{n-k}\mres S^*\ |\ S^*\in\Gamma_B\}.
\]

\subsection{Rectifiable sets}
A subset of $\R^n$ is \emph{$k$-rectifiable} if it is covered by the image of a subset of $\R^k$ under a Lipschitz map. A subset of $\R^n$ is \emph{countably $k$-rectifiable} if $\Ha^k$-almost all of it is contained in a countable union of $k$-rectifiable sets. 

See e.g. \cite{GMT, SimonGMT} for basic theory on rectifiable sets. Note that the definition of countable rectifiability in \cite[3.2.14]{GMT} is slightly different from ours. 

Let us record some useful facts on rectifiable sets. The following Fubini-type lemma is an application of \cite[3.2.23]{GMT} and \cite[2.6.2]{GMT}. 
\begin{lemma}\label{lemma:fubini}
Suppose $S^*$ is a countably $k$-rectifiable subset of $\R^n$ and $S$ is a countable union of $l$-rectifiable subsets of $\R^m$. Then $S^*\times S$ is a countably $(k+l)$-rectifiable subset of $\R^{n}\times\R^m$, and 
\[
\int_{S^*\times S}g(x, y)\, d\Ha^{k+l}(x, y)=\int_{S^*}\int_{S} g(x, y)\, d\Ha^l(y)\, d\Ha^{k}(x)
\]
for any positive Borel function $g$ on $\R^n\times \R^m$.
\end{lemma}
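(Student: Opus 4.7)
The plan is to reduce the statement to the classical area formula and the Euclidean Fubini--Tonelli theorem via bi-Lipschitz parametrizations of the rectifiable factors. First I would establish the rectifiability of the product: write $S^* \subset N \cup \bigcup_i \phi_i(A_i)$ and $S = \bigcup_j \psi_j(B_j)$ with $A_i \subset \R^k$, $B_j \subset \R^l$ measurable, $\phi_i$ and $\psi_j$ Lipschitz, and $\Ha^k(N) = 0$. The product maps $\phi_i \times \psi_j\colon A_i \times B_j \to \R^{n+m}$ are then Lipschitz and cover $S^* \times S$ except for the piece $N \times S$. The latter is $\Ha^{k+l}$-null by a direct box-covering argument: covering $N$ by balls of total $k$-content less than $\e$ and each $\psi_j(B_j)$ by balls of bounded total $l$-content produces a cover of $N \times S$ whose $(k+l)$-content tends to zero with $\e$.

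Next I would refine the parametrizations so that the $\phi_i$ and $\psi_j$ are pairwise injective with pairwise disjoint images, which is possible by the standard decomposition of countably rectifiable sets into countably many bi-Lipschitz patches. Then the product images are disjoint and cover $S^* \times S$ up to an $\Ha^{k+l}$-null set, so it suffices to verify the Fubini identity on a single piece $(\phi \times \psi)(A \times B)$. Since the differential of $\phi \times \psi$ at $\mathcal{L}^{k+l}$-a.e. point of $A\times B$ is block-diagonal by Rademacher's theorem applied separately to $\phi$ and $\psi$, the $(k+l)$-Jacobian factors as $J(\phi\times\psi)(x,y) = J\phi(x)\, J\psi(y)$. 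The area formula then gives
\[
\int_{(\phi\times\psi)(A\times B)} g\, d\Ha^{k+l} = \int_{A\times B} g(\phi(x),\psi(y))\, J\phi(x) J\psi(y)\, d\mathcal{L}^{k+l}(x,y),
\]
and an application of classical Fubini--Tonelli in $\R^k\times\R^l$ followed by the area formula on the inner integral rewrites this as
\[
\int_{\phi(A)}\int_{\psi(B)} g(v,w)\, d\Ha^l(w)\, d\Ha^k(v).
\]
Summing over the countable collection of disjoint patches via monotone convergence (using positivity of $g$) yields the claimed identity.

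The main technical point I anticipate is the bookkeeping at null sets. Specifically, one has to verify that the $\Ha^k$-null exceptional subset of $S^*$ contributes nothing to the iterated integral on the right hand side, which follows once one knows the inner integral is a Borel function of $v$ with finite integral on sets of finite $\Ha^k$-measure; and one has to ensure that the bi-Lipschitz refinement truly recovers the full Hausdorff integrals on each factor without multiplicity, which is exactly the content of the structure theory for countably rectifiable sets. Once these measure-theoretic checks are in place, the block-diagonal Jacobian factorization and classical Fubini--Tonelli do the rest.
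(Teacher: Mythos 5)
The paper does not actually prove this lemma; it simply cites \cite[3.2.23]{GMT} (product of rectifiable sets) together with \cite[2.6.2]{GMT} (Fubini for product measures). Your proposal is a from-scratch reconstruction of essentially Federer's argument: decompose into (bi-)Lipschitz patches, factor the Jacobian of $\phi\times\psi$ block-diagonally, apply the area formula twice and classical Fubini--Tonelli in between, and sum over patches by monotone convergence. That core computation is correct, and as a self-contained argument it is more informative than the citation.

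There is, however, one step that fails as literally stated: the claim that covering $N$ by balls of total $k$-content less than $\e$ and each $\psi_j(B_j)$ by balls of \emph{bounded total $l$-content} yields a cover of $N\times\psi_j(B_j)$ of small $(k+l)$-content. If $U$ and $V$ are balls of radii $r$ and $s$, then $\mathrm{diam}(U\times V)^{k+l}\approx\max(r,s)^{k+l}\geq r^k s^l$, so the naive product cover gives an estimate in the \emph{wrong} direction, and if the two covers live at mismatched scales the sum need not be small. The argument only closes if, for each ball of radius $r_i$ in the cover of $N$, you cover $\psi_j(B_j)$ by $\lesssim r_i^{-l}$ balls of radius comparable to $r_i$; this is possible because $B_j$ is a (bounded piece of a) subset of $\R^l$ and $\psi_j$ is Lipschitz, and it is precisely here that the asymmetric hypothesis enters --- $S$ must be a genuine countable union of $l$-rectifiable sets, not merely countably $l$-rectifiable, which is why the paper points to the counterexample in \cite[3.2.24]{GMT}. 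Your write-up does not register this asymmetry. A related bookkeeping point: when you disjointify $S$ into bi-Lipschitz patches you reintroduce an $\Ha^l$-null exceptional set $N'$ in the $S$-factor, and disposing of $S^*\times N'$ requires the same scale-matched covering argument with the roles of the factors reversed (using that $S^*$ minus its null part is itself a countable union of $k$-rectifiable sets). Both issues are repairable, but they sit exactly at the point where the lemma's hypotheses do their work, so they need to be made explicit.
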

Lemma \ref{lemma:fubini} is not true for general countably $k$-rectifiable sets $S$, see \cite[3.2.24]{GMT}. The second tool we need is the coarea formula, see e.g. \cite[12.7]{SimonGMT}.
\begin{lemma}\label{lemma:coarea}
Suppose $m\leq k$. Let $S$ be a countably $k$-rectifiable subset of $\R^n$ and let $u: S\rightarrow \R^m$ be locally Lipschitz. Then
\begin{equation}\label{eq:rectcoarea}
\int_{\R^m}\int_{u^{-1}(z)}g\, d\Ha^{k-m}\, d\Ha^m(z)=\int_SgJ^S_u\, d\Ha^k
\end{equation}
for every positive Borel function $g$ on $S$.
\end{lemma}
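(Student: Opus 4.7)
The plan is to reduce to the classical coarea formula for Lipschitz maps on $\R^k$ and then glue the local pieces together using the structure of countably $k$-rectifiable sets.

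First I would decompose $S$, using countable $k$-rectifiability, as a disjoint union $S=N\cup\bigcup_{i=1}^\infty S_i$, where $\Ha^k(N)=0$ and each $S_i$ is the image of a subset $A_i\subset\R^k$ under an injective Lipschitz map $f_i\colon A_i\to\R^n$. By the standard refinement in \cite[3.2.18]{GMT} I can also arrange that the approximate tangent plane $\mathrm{Tan}^k(S,x)$ exists and coincides with the image of $df_i$ at $\Ha^k$-a.e.\ $x\in S_i$. Extending $u|_{S_i}$ to a Lipschitz map $\tilde u_i$ on a neighbourhood of $S_i$ in $\R^n$ by Kirszbraun's theorem, the composition $\tilde u_i\circ f_i\colon A_i\to\R^m$ is a Lipschitz map defined on a subset of $\R^k$, and the classical euclidean coarea formula \cite[3.2.12]{GMT} gives
\[
\int_{\R^m}\int_{(\tilde u_i\circ f_i)^{-1}(z)}\tilde g\,d\Ha^{k-m}\,d\Ha^m(z)=\int_{A_i}\tilde g\,J_{\tilde u_i\circ f_i}\,d\Ha^k,
\]
with $\tilde g=(g\circ f_i)\,J_{f_i}$ chosen so that the area formula carries integrals on $A_i$ over to $S_i$.

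Next I would transfer both sides of this identity back to $S_i$. On the right-hand side, combining the area formula for $f_i$ with the chain-rule identity
\[
J_{\tilde u_i\circ f_i}(a)=J^{S_i}_u(f_i(a))\,J_{f_i}(a),
\]
valid at $\Ha^k$-a.e.\ $a\in A_i$ since $J^{S_i}_u$ is by definition the Jacobian of $d\tilde u_i$ restricted to $\mathrm{Tan}^k(S_i,f_i(a))$, yields $\int_{S_i}g\,J^{S_i}_u\,d\Ha^k$. On the left-hand side the same area formula, applied slicewise to $f_i$ on $(\tilde u_i\circ f_i)^{-1}(z)$ for $\Ha^m$-a.e.\ $z$, rewrites the inner integral as $\int_{u^{-1}(z)\cap S_i}g\,d\Ha^{k-m}$; the measurability of $z\mapsto\int_{u^{-1}(z)\cap S_i}g\,d\Ha^{k-m}$ required here is inherited from the classical statement via Fubini.

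Finally I would sum over $i$ using monotone convergence and check that $N$ contributes nothing: the classical coarea inequality applied to a Lipschitz extension of $u$ implies that $\Ha^{k-m}(u^{-1}(z)\cap N)=0$ for $\Ha^m$-a.e.\ $z$. The main obstacle is the careful verification that the intrinsic Jacobian $J^S_u$, defined through the approximate tangent plane to $S$, pulls back correctly under the parametrizations $f_i$. This ultimately reduces to Rademacher differentiability of $f_i$ and $\tilde u_i$ at $\Ha^k$-a.e.\ $a\in A_i$ together with a linear-algebra identification of $\mathrm{im}\,df_i(a)$ with $\mathrm{Tan}^k(S,f_i(a))$. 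Once these identifications are justified, the remainder is bookkeeping with countable unions and null sets.
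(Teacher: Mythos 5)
The paper itself offers no proof of this lemma---it is quoted directly from \cite[12.7]{SimonGMT}---so the only comparison available is with the standard textbook argument, and your reduction to the euclidean coarea formula via Lipschitz parametrizations is indeed that standard route. However, as written the proposal hinges on a Jacobian identity that is false. The chain rule $J_{\tilde u_i\circ f_i}(a)=J^{S_i}_u(f_i(a))\,J_{f_i}(a)$ fails for a general injective Lipschitz parametrization: take $k=n=2$, $m=1$, $f(a_1,a_2)=(a_1,2a_2)$ and $u(x_1,x_2)=x_1$; then $J_{u\circ f}\equiv 1$ while $J^{S}_u\,J_f\equiv 2$. The correct identity is
\[
J_{\tilde u_i\circ f_i}(a)=\frac{J^{S_i}_u(f_i(a))\,J_{f_i}(a)}{\lambda_i(a)},\qquad \lambda_i(a):=\text{the $(k-m)$-Jacobian of }df_i(a)\big|_{\ker d(\tilde u_i\circ f_i)(a)},
\]
because the $k$-dimensional stretching of $f_i$ splits between the fibre directions and the directions transverse to the fibres, and only the transverse part enters the coarea factor. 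The same $\lambda_i$ is what the slicewise area formula actually requires on the left-hand side: with your choice $\tilde g=(g\circ f_i)J_{f_i}$ the inner integral $\int_{(\tilde u_i\circ f_i)^{-1}(z)}\tilde g\,d\Ha^{k-m}$ equals $\int_{u^{-1}(z)\cap S_i}g\,(J_{f_i}/\lambda_i)\circ f_i^{-1}\,d\Ha^{k-m}$, not $\int_{u^{-1}(z)\cap S_i}g\,d\Ha^{k-m}$, since the area formula on a $(k-m)$-dimensional fibre uses the $(k-m)$-Jacobian of $f_i$ along the fibre, not the full $k$-Jacobian. The two errors are complementary (in the example above the left side happens to come out right and the right side wrong; for $f(a_1,a_2)=(2a_1,a_2)$ it is the reverse), so they do not cancel, and the derivation as written does not close.

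The fix is standard and small, but it is exactly the crux of the lemma, so it should not be waved through. Either (a) define $\tilde g=(g\circ f_i)\lambda_i$ and use the corrected chain rule above, after which both sides transfer as you intend; or (b) invoke \cite[3.2.18]{GMT} in its full strength to refine the decomposition so that each $f_i$ is $(1+\e)$-bi-Lipschitz with $df_i$ within $\e$ of a linear isometry $\Ha^k$-a.e.; then $J_{f_i}$, $\lambda_i$ and all the discrepancies above lie in $[(1+\e)^{-k},(1+\e)^{k}]$, one obtains \eqref{eq:rectcoarea} up to a multiplicative error $(1+\e)^{O(k)}$, and letting $\e\to0$ finishes the proof. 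Your treatment of the null piece $N$ via the Eilenberg inequality and the final summation over $i$ by monotone convergence are fine.
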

Let us define the jacobian $J^S_u$ appearing in \eqref{eq:rectcoarea}. Details can be found in \cite[§12]{SimonGMT}. Suppose first, that $S$ is an embedded $C^1$ $k$-submanifold (without boundary) of $\R^n$. Then $u$ is differentiable at $\Ha^k$-almost every $x\in S$. Fix such an $x$, and let $\{E_1, \ldots, E_k\}$ be an orthonormal basis for the tangent space of $S$ at $x$. Let $Du(x)$ be the jacobian matrix of $u$ at $x$ with respect to standard bases of $\R^n$ and $\R^m$. We set
\[
J^S_u(x):=\sqrt{\det(d^Su(x)d^Su(x)^t)},
\]
where $d^Su(x)$ is the matrix with columns $Du(x)E_i$. It can be shown that $J^S_u(x)$ does not depend on the choice of the basis $\{E_i\}$.

More generally, every countably $k$-rectifiable set $S$ can be expressed as a disjoint union $S=\bigcup_{i=0}^\infty M_i$, where $\Ha^k(M_0)=0$ and each $M_i$ for $i\geq 1$ is contained in an embedded $C^1$ $k$-submanifold $N_i$ of $\R^n$. Given an $x\in M_i$ with $i\geq 1$, we set
\[
J^S_u(x):=J^{N_i}_u(x).
\]
Then $J^S_u$ is well defined $\Ha^k$-almost everywhere on $S$. It can be shown that $J^S_u$ does not depend on the decomposition $S=\bigcup_{i=0}^\infty M_i$, up to sets of $\Ha^k$-measure zero.
\section{Proof of Theorem \ref{thm:main}}\label{sec:proof}
Given any set $S\subset\R^n$ and a vector $y\in\R^n$ we denote 
\[
S_y=\{x+y\ |\ x\in S\}
\]
and 
\[
N_\varepsilon(S)=\{x\ |\ d(x, S)<\varepsilon\}.
\]
Denote by $\Gamma_A^*$ the collection of ($n-k$)-rectifiable subsets $S^*$ of $Q-A$, such that the homomorphism
\[
i_*: H_k^L(Q-S^*, A)\rightarrow H_k^L(Q, A)
\]
induced by inclusion is trivial. Lemma \ref{lemma:leikkauslemma} below implies that $\Gamma_B\subset\Gamma_A^*$. Every set $S^*\in\Gamma_A^*$ intersects with every $S\in\Gamma_A$ in a nonempty set. To see this, note that if $|\sigma|\cap S^*$ is empty for some Lipschitz cycle $\sigma\in C_k(Q)$ relative to $A$, then $[\sigma]=i_*[\sigma]=0$ in $H_k^L(Q, A)$ by the definition of $\Gamma_A^*$.

We abbreviate
\[
\modd_q\Gamma_A^*:=\modd_q\{\Ha^{n-k}\mres S^*\ |\ S^*\in\Gamma_A^*\}.
\]
Theorem \ref{thm:main} is then implied by the following more general result.
\begin{theorem}\label{thm:main2}
For every $1<p<\infty$ 
\[
(\modd_p\Gamma_A)^{1/p}(\modd_q\Gamma_A^*)^{1/q}\leq 1,
\]
where $q=\frac{p}{p-1}$.
\end{theorem}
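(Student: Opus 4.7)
The plan is to prove the product bound by the standard dual-function trick, building a $q$-admissible function for $\Gamma_A^*$ out of the $p$-extremal function for $\Gamma_A$, and reducing the theorem to one key weak-admissibility estimate. Assume $M := \modd_p \Gamma_A \in (0, \infty)$; the degenerate cases $M = 0$ and $M = \infty$ can be handled separately, for instance by rescaling the Lipschitz generator of $H_{n-k}^L(Q,B) \subset \Gamma_A^*$ to exhibit admissible functions making the corresponding side small. By Lemma \ref{lemma:modulilemma}(ii) there is a $p$-extremal weakly admissible $\rho$ for $\Gamma_A$ with $\int_Q \rho^p\, d\Ha^n = M$, and the variational comparison
\[
M \;\le\; \int_Q \phi\,\rho^{p-1}\, d\Ha^n
\]
holds for every weakly admissible $\phi$ of $\Gamma_A$. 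Set $\rho^* := M^{-1}\rho^{p-1}$. A direct computation gives $\int_Q (\rho^*)^q\, d\Ha^n = M^{-q}\int_Q \rho^p\, d\Ha^n = M^{1-q}$, so if $\rho^*$ is $q$-weakly admissible for $\Gamma_A^*$, then Lemma \ref{lemma:modulilemma}(ii) yields $\modd_q \Gamma_A^* \le M^{1-q}$, and hence
\[
(\modd_p \Gamma_A)^{1/p}\,(\modd_q \Gamma_A^*)^{1/q} \;\le\; M^{1/p}\,M^{(1-q)/q} \;=\; M^{1/p+1/q-1} \;=\; 1,
\]
proving the theorem.

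The proof thus reduces to showing that for $q$-a.e.\ $S^* \in \Gamma_A^*$,
\[
\int_{S^*}\rho^{p-1}\, d\Ha^{n-k}\;\ge\; M.
\]
The strategy is to construct a sequence of Borel functions $\phi_\delta \ge 0$ on $Q$, supported in the tube $N_\delta(S^*)$ and $p$-weakly admissible for $\Gamma_A$, such that
\[
\int_Q \phi_\delta\,\rho^{p-1}\, d\Ha^n \;\longrightarrow\; \int_{S^*}\rho^{p-1}\, d\Ha^{n-k}\qquad\text{as } \delta \to 0,
\]
for $q$-a.e.\ $S^*$. Applying the variational comparison to each $\phi_\delta$ and taking $\delta \to 0$ then delivers the desired lower bound. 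The convergence of the averaged integrals to the surface integral should follow from the $(n-k)$-rectifiability of $S^*$ and Lebesgue differentiation along tubular coordinates; the upgrade from ``for $q$-a.e.\ $S^*$'' to weak admissibility is handled by Fuglede's lemma, Lemma \ref{lemma:modulilemma}(i).

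The principal obstacle is arranging admissibility of $\phi_\delta$ for $\Gamma_A$, namely the quantitative lower bound on $\Ha^k(|\sigma|\cap N_\delta(S^*))$ comparable to $\delta^{n-k}$ (up to the chosen normalization of $\phi_\delta$) for every Lipschitz $k$-cycle $\sigma$ representing the generator of $H_k^L(Q,A)$. The naive choice $\phi_\delta = (\omega_{n-k}\delta^{n-k})^{-1}\mathbf{1}_{N_\delta(S^*)}$ is insufficient when $n-k > 1$: a transverse intersection of $|\sigma|$ with $S^*$ contributes only $\sim \delta^k$ of $\Ha^k$-mass to $N_\delta(S^*)$, which falls short of the required $\sim \delta^{n-k}$. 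This is why the codimension-one argument of Ziemer does not carry over directly. To overcome this, one must exploit the \emph{full} topological hypothesis $i_*\colon H_k^L(Q-S^*, A) \to H_k^L(Q, A)$ being trivial in the definition of $\Gamma_A^*$, combined with the homotopy formula \eqref{eq:homotopyformula} and a controlled Lipschitz deformation of $Q$ into an $A$-augmented neighborhood of $S^*$. The locally Lipschitz neighborhood retract hypothesis on $(Q, A, B)$ is precisely the input that provides the Lipschitz deformations required to turn the topological triviality into a quantitative area-formula estimate on $\Ha^k(|\sigma|\cap N_\delta(S^*))$. Carrying out this topological-to-metric transfer and then passing to the limit $\delta\to 0$ is the technical heart of the argument.
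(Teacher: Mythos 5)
Your overall duality framework is sound and mirrors the paper's (the paper runs it in the opposite direction: it takes the extremal $\rho$ of $\modd_q\Gamma_A^{*\delta}$ and shows that $\rho^{q-1}/\modd_q\Gamma_A^{*\delta}$ is weakly admissible for $\Gamma_A^\delta$, after an exhaustion $\Gamma_A=\bigcup_\delta\Gamma_A^\delta$ justified by Lemma \ref{lemma:modulilemma}(iii); either direction of dualization is fine). The problem is that everything you defer as ``the technical heart of the argument'' is in fact the entire content of the theorem, and the route you sketch for it is unlikely to work. A pointwise lower bound $\Ha^k(|\sigma|\cap N_\delta(S^*))\gtrsim\delta^{k}$ (note: the correct exponent for a tube of codimension $k$ normalized to converge to $\Ha^{n-k}\mres S^*$ is $\delta^{k}$, not $\delta^{n-k}$ as you wrote) cannot be extracted from the topological hypothesis alone: the triviality of $i_*\colon H_k^L(Q-S^*,A)\to H_k^L(Q,A)$ only guarantees that $|\sigma|\cap S^*\neq\emptyset$, and images of Lipschitz chains have no lower density bounds, so the $\Ha^k$-mass of $|\sigma|$ inside $N_\delta(S^*)$ can be made arbitrarily small relative to $\delta^k$. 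No ``controlled Lipschitz deformation'' fixes this, because the obstruction is metric, not topological.

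The paper's resolution avoids any quantitative intersection estimate. One convolves the measure $\Ha^k\mres S$ (for $S\in\Gamma_A^\delta$) with a smooth kernel and proves that $\phi^{S_z}_\varepsilon$ is admissible for $\Gamma_A^{*\delta}$ (Proposition \ref{lemma:taikalemma}): by the Fubini-type Lemma \ref{lemma:fubini} and the coarea formula (Lemma \ref{lemma:coarea}) applied to $u(x,y)=x-y$ on $S^*\times S$, with $J_u^{S^*\times S}\le 1$, one gets
\[
\int_{S^*}\phi^{S}_\varepsilon\, d\Ha^{n-k}\;\ge\;\int_{\varepsilon\B^n}\phi_\varepsilon(w)\,\#\bigl((S^*\times S)\cap\{x-y=w\}\bigr)\, d\Ha^n(w)\;\ge\;1,
\]
because \emph{every small translate} $S_w$ still meets $S^*$. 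That translation-invariant intersection property (Proposition \ref{prop:leikkaus}) is what the homotopy formula and the intersection-number computation of Lemmas \ref{lemma:siirtolemma} and \ref{lemma:leikkauslemma} actually deliver, and it is exactly strong enough: the coarea formula converts ``nonempty intersection for all translates'' into ``integral at least $1$,'' with no mass lower bound needed. This mechanism, together with the $\delta$-truncation needed to keep translates and deformations inside $Q-B$, is absent from your proposal, so as written the proof has a genuine gap at its central step.
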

The rest of this section is focused on the proof of Theorem \ref{thm:main2}.

For each $\delta>0$ let $\Gamma_A^\delta$ be the subcollection of $\Gamma_A$ consisting of those sets whose distance to $B$ is at least $100\delta$. The subcollections $\Gamma_A^{*\delta}$ are defined analogously. In light of $iii)$ of Lemma \ref{lemma:modulilemma}, it suffices to show that
\begin{equation}\label{eq:roe1}
(\modd_p\Gamma^\delta_A)^{1/p}(\modd_{q}\Gamma^{*\delta}_A)^{1/q}\leq 1
\end{equation}
for all $\delta$. Fix a $\delta$ for the rest of the proof. We may assume without loss of generality that the moduli in question are nonzero and the collections $\Gamma_A^\delta$ and $\Gamma_A^{*\delta}$ are nonempty.

The following intersection property of the elements of $\Gamma_A$ and $\Gamma_A^*$ forms the topological core of Theorem \ref{thm:main2}.
\begin{proposition}\label{prop:leikkaus}
The intersection $S_z\cap S^*$ is nonempty for every $S\in\Gamma_A^\delta$, $S^*\in\Gamma_A^{*\delta}$ and $|z|< 10\delta$.
\end{proposition}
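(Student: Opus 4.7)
The plan is a proof by contradiction. Suppose $S_z \cap S^* = \emptyset$, and let $\sigma \in C_k^L(Q - B)$ be a Lipschitz relative cycle of $(Q, A)$ with $|\sigma| = S$ whose class generates $H_k^L(Q, A)$. The goal is to produce a Lipschitz cycle $\tilde\sigma_z$ of $(Q - S^*, A)$ that is homologous to $\sigma$ in $H_k^L(Q, A)$. Because $S^* \in \Gamma_A^{*\delta} \subset \Gamma_A^*$, the inclusion-induced map $i_*: H_k^L(Q - S^*, A) \to H_k^L(Q, A)$ is trivial, so we would obtain $[\sigma] = i_*[\tilde\sigma_z] = 0$, contradicting the generator property.

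First, I would use the locally Lipschitz neighborhood retract hypothesis to construct a Lipschitz retraction $\rho: U \to Q$ from a neighborhood $U$ of $Q$ in $\R^n$ that additionally maps some sub-neighborhood $V$ of $A$ into $A$. Such a $\rho$ is obtained by patching the given retractions onto $Q$ and onto $A$ via a Lipschitz cutoff near $A$. Define $\tilde\sigma_z(u) := \rho(\sigma(u) + z)$ simplex by simplex. Because $\dist(S, B) \geq 100\delta$ and $|z| < 10\delta$, one checks that $\sigma(u) + z \in U$ for each $u$ in the support of $\sigma$, and $\partial\sigma(u) + z \in V$ for each $u$ in the support of $\partial\sigma$, so $\tilde\sigma_z$ is a Lipschitz chain in $Q$ with boundary in $A$.

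To verify $[\tilde\sigma_z] = [\sigma]$ in $H_k^L(Q, A)$, apply the Lipschitz homotopy formula \eqref{eq:homotopyformula} to the Lipschitz homotopy $H(u, t) := \rho(\sigma(u) + tz)$. By the construction of $\rho$ and $V$, this is a homotopy of pairs $(\Delta_k, \partial\Delta_k) \times [0, 1] \to (Q, A)$ from $\sigma$ at $t = 0$ to $\tilde\sigma_z$ at $t = 1$, so the formula realizes $\tilde\sigma_z - \sigma$ as a boundary in $C_k^L(Q, A)$. To verify $|\tilde\sigma_z| \cap S^* = \emptyset$, split into cases: if $\sigma(u) + z \in Q$, then $\rho$ is the identity there, so $\tilde\sigma_z(u) \in S_z$, which misses $S^*$ by assumption; if $\sigma(u) + z \notin Q$, then $\sigma(u) + z$ lies within $10\delta$ of $Q$ yet at distance at least $90\delta$ from $B$, so the nearest point of $\partial Q$ must belong to $A$, giving $\dist(\sigma(u) + z, A) < 10\delta$. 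Applying the Lipschitz $\rho$ (which fixes $A$) then places $\tilde\sigma_z(u)$ in a small neighborhood of $A$, which is disjoint from $S^*$ since $\dist(S^*, A) \geq 100\delta$.

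The main obstacle is the construction of $\rho$ with the two required properties simultaneously, and with a Lipschitz constant small enough that the $100\delta$-vs-$10\delta$ slack suffices for the neighborhood-of-$A$ estimate in the last step. This is where the locally Lipschitz neighborhood retract assumption on both $Q$ and $A$ is essential, and where careful gluing near $\partial A = A \cap B$ is required; for the standard cube or smooth/polygonal configurations the gluing is routine, but in the general Lipschitz setting one may need to shrink the retract neighborhoods before patching.
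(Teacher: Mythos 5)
Your overall strategy --- derive a contradiction with the triviality of $i_*: H_k^L(Q-S^*,A)\to H_k^L(Q,A)$ by producing a Lipschitz relative cycle of $(Q,A)$ that generates $H_k^L(Q,A)$ and misses $S^*$ --- is exactly the paper's. The gap is in the construction of that cycle. The map $\rho$ you require cannot exist: a retraction $\rho:U\to Q$ is the identity on all of $Q$, so if it also mapped a neighborhood $V$ of $A$ into $A$, every point of $V\cap \mathrm{int}(Q)$ would be both fixed by $\rho$ and sent into $A$, which is absurd since $A\subset\partial Q$. This is not a gluing technicality near $\partial A$; the two properties are mutually exclusive on any two-sided neighborhood of $A$. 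The failure propagates: a boundary point $a\in|\partial\sigma|\subset A$ translated to $a+tz$ may land in $\mathrm{int}(Q)$, where your $\rho$ fixes it, so $H(\cdot,t)$ does not carry $|\partial\sigma|$ into $A$, the homotopy is not a homotopy of pairs, and $\partial\tilde\sigma_z$ need not lie in $A$ --- so $\tilde\sigma_z$ is not even a relative cycle of $(Q,A)$, and the homotopy formula cannot be applied relative to $A$. Reading $V$ as a one-sided (exterior) neighborhood repairs the nonexistence of $\rho$ but not this second problem, and in any case a ``retraction of pairs'' onto $(Q,A)$ does not follow from $Q$ and $A$ being separately locally Lipschitz neighborhood retracts.

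The paper's Lemma \ref{lemma:siirtolemma} avoids retractions altogether, and this is the fix you want. After barycentric subdivision one writes $\sigma=\sigma_1+\sigma_2$ with $|\sigma_1|\subset N_{30\delta}(A)$ and $|\sigma_2|\subset Q-N_{20\delta}(\partial Q)$; only the interior part $\sigma_2$ is translated (so it stays inside $Q$ and no retraction is needed), the part near $A$ is left untouched, and the prism $P(\partial\sigma_2)$ from the homotopy formula glues the two pieces into a relative cycle $\sigma_z=\sigma_1-P(\partial\sigma_2)+H_{1*}\sigma_2$ homologous to $\sigma$. Its image agrees with $S_z$ only outside $N_{100\delta}(A)$, but that suffices because $\dist(S^*,A)\geq 100\delta$ for $S^*\in\Gamma_A^{*\delta}$; your concluding contradiction then goes through verbatim. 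Note also that your construction needs the retract neighborhoods to contain $N_{10\delta}(Q)$ and $N_{10\delta}(A)$, which for fixed neighborhoods fails for all but small $\delta$, whereas the subdivision argument works for every $\delta$.
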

We postpone the proof to Subsection \ref{subsec:topology}.

Let $S\in \Gamma^{\delta}_A$. Observe that the map
\begin{equation}\label{eq:distribuutio}
g\mapsto\int_{S} g\, d\Ha^{k}
\end{equation}
is a distribution in $\R^n$. Thus we have by \cite[4.1.2]{GMT} that
\begin{equation}\label{eq:konvoluutio}
\int_Q\phi^{S}_\varepsilon g\, d\Ha^n \overset{\varepsilon\rightarrow 0}{\longrightarrow}\int_{S} g\, d\Ha^k
\end{equation}
for every smooth compactly supported function $g$, where
\[
\phi_\varepsilon^{S}(x):=\int_{S}\phi_\varepsilon(x-y)\, d\Ha^k(y)
\]
is the \emph{convolution} of the distribution \eqref{eq:distribuutio} with respect to a smooth kernel $\phi$. That is, $\phi_\varepsilon(x)=\varepsilon^{-n}\phi(\varepsilon^{-1}x)$ and $\phi$ is a positive smooth function on $\R^n$ that vanishes outside the unit ball $\B^n$ and satisfies $\int_{\B^n}\phi\, d\Ha^n=1$.

Smoothness is convenient for avoiding tedious technicalities, but to see the geometry behind the arguments that follow, the reader is encouraged to repeat the proof with the nonsmooth kernel $\phi=|\B^n|^{-1}\chi_{\B^n}$. 

Theorem \ref{thm:main2} follows via \eqref{eq:roe1} from the following proposition.
\begin{proposition}\label{lemma:taikalemma}
The convolution $\phi^{S_z}_\varepsilon$ is admissible for $\Gamma_A^{*\delta}$ for all $\varepsilon<\delta$ and all $|z|<\delta$.
\end{proposition}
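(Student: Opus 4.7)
The plan is to reduce the admissibility inequality to the intersection statement of Proposition \ref{prop:leikkaus} via Fubini's theorem and the coarea formula. Fix $S^* \in \Gamma_A^{*\delta}$ and unfold the definition:
\[
\int_{S^*} \phi_\varepsilon^{S_z}\, d\Ha^{n-k} = \int_{S^*}\int_{S_z} \phi_\varepsilon(x-y)\, d\Ha^k(y)\, d\Ha^{n-k}(x) = \int_{S_z \times S^*} \phi_\varepsilon(x-y)\, d\Ha^n(y,x).
\]
The second equality is Lemma \ref{lemma:fubini}; its hypotheses hold because $S_z$, being the image of a Lipschitz $k$-chain, is a finite union of $k$-rectifiable sets, while $S^*$ is countably $(n-k)$-rectifiable by definition of $\Gamma_A^*$.

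Next, I would apply the coarea formula (Lemma \ref{lemma:coarea}) with $m=k=n$ to the $1$-Lipschitz map $F : S_z \times S^* \to \R^n$ defined by $F(y,x) = x - y$. The resulting identity
\[
\int_{S_z \times S^*} \phi_\varepsilon(x-y)\, J_F\, d\Ha^n = \int_{\R^n}\phi_\varepsilon(w)\, \#F^{-1}(w)\, d\Ha^n(w),
\]
combined with the pointwise bound $J_F \le 1$, yields
\[
\int_{S_z \times S^*} \phi_\varepsilon(x-y)\, d\Ha^n \ge \int_{\R^n}\phi_\varepsilon(w)\, \#F^{-1}(w)\, d\Ha^n(w).
\]
To verify $J_F \le 1$, note that at $\Ha^n$-almost every $(y,x)$ the tangent plane $T_y S_z$ admits an orthonormal basis $v_1,\dots,v_k$ and $T_x S^*$ one of the form $v_{k+1},\dots,v_n$, so the matrix $d^{S_z \times S^*}F$ has columns $-v_1,\dots,-v_k,v_{k+1},\dots,v_n$; hence $J_F(y,x) = |\det[v_1,\dots,v_n]| \le 1$ by Hadamard's inequality (with equality exactly when $T_y S_z \perp T_x S^*$).

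Finally, I would use Proposition \ref{prop:leikkaus} to bound $\#F^{-1}(w)$ from below. The identification $F^{-1}(w) = \{(x-w,x) : x \in S^* \cap S_{z+w}\}$ shows that $\#F^{-1}(w) \ge 1$ whenever $S^* \cap S_{z+w}$ is nonempty. For $w$ in the support of $\phi_\varepsilon$ we have $|z+w| < \delta + \varepsilon < 2\delta \le 10\delta$, so Proposition \ref{prop:leikkaus} applies and the intersection is nonempty. Therefore the integral is at least $\int_{\R^n}\phi_\varepsilon\, d\Ha^n = 1$, proving admissibility.

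The main obstacle is the Jacobian estimate: it is the single place where the codimension splitting $k + (n-k) = n$ enters geometrically, and it requires invoking the rectifiable-product structure to guarantee that the tangent planes and orthonormal bases are defined $\Ha^n$-almost everywhere on $S_z \times S^*$. The remainder of the argument is a bookkeeping chain of Fubini, coarea, and the geometric nonemptiness from Proposition \ref{prop:leikkaus}.
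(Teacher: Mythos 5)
Your proof is correct and follows essentially the same route as the paper: Fubini for rectifiable products, the coarea formula for $u(x,y)=x-y$ with the Hadamard bound $J_u\le 1$, and Proposition \ref{prop:leikkaus} applied to the shift $z+w$ to bound the fiber count from below. The only cosmetic difference is that you treat general $z$ in one pass, whereas the paper first proves the case $z=0$ and then reduces to it via the translation identity $\phi^{S_z}_\varepsilon(x)=\phi^S_\varepsilon(x-z)$.
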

\begin{proof}
Fix an $\varepsilon<\delta$ and a set $S^*\in\Gamma_A^{*\delta}$. Let $z=0$ for now. By Lemma \ref{lemma:fubini}
\begin{align*}
\int_{S^*}\phi_\varepsilon^S(x)\, d\Ha^{n-k}(x)&=\int_{S^*}\int_S\phi_\varepsilon(x-y)\, d\Ha^k(y)d\Ha^{n-k}(x)\\
&=\int_{S^*}\int_{S\cap N_\varepsilon(S^*)}\phi_\varepsilon(x-y)\, d\Ha^k(y)d\Ha^{n-k}(x)\\
&=\int_{(S^*\times S)\cap \{|x-y|<\varepsilon\}}\phi_\varepsilon(x-y)\, d\Ha^n(x, y).
\end{align*}
Now we can apply the coarea formula (Lemma \ref{lemma:coarea}) on the map $u(x, y)=x-y$ to obtain
\begin{equation}\label{eq:apu1}
\int_{S^*}\phi_\varepsilon^S(x)\, d\Ha^{n-k}(x)\geq \int_{\varepsilon\B^n}\int_{(S^*\times S)\cap\{x-y=w\}}\phi_\varepsilon(x-y)\, d\Ha^0d\Ha^n(w) 
\end{equation}
since $J_u^{S^*\times S}\leq 1$. To see this, note for any $(n-k)$- and $k$-dimensional embedded $C^1$ submanifolds $N^*$ and $N$ of $\R^n$ the matrix $d^{N^*\times N}u$ consists of unit column vectors. Thus $J^{N^*\times N}_u\leq 1$. It follows that $J_u^{S^*\times S}\leq 1$ as well, since it can be computed via $J_u^{M_i^*\times M_j}$ with $i, j\geq 1$, where $S^*=\bigcup_{i=0}^\infty M_i^*$ and $S=\bigcup_{i=0}^\infty M_j$ are decompositions of $S^*$ and $S$ as in the discussion following Lemma \ref{lemma:coarea}. Note that the sets $M_0^*\times S$ and $S^*\times M_0$ have zero $\Ha^n$-measure by Lemma \ref{lemma:fubini}. 

Finally, we apply Proposition \ref{prop:leikkaus} on \eqref{eq:apu1} and obtain 
\[
\int_{S^*}\phi_\varepsilon^S(x)\, d\Ha^{n-k}(x)\geq \int_{\varepsilon\B^n}\phi_\varepsilon(w)\, d\Ha^n(w)=1.
\]
The proof in the case of general $z$ reduces to the case $z=0$ via
\begin{equation}\label{eq:zsiirto}
\phi^{S_{z}}_\varepsilon(x)=\phi^S_\varepsilon(x-z),
\end{equation} 
since Proposition \ref{prop:leikkaus} can still be applied.
\end{proof}
\begin{proof}[Proof of Theorem \ref{thm:main2}]
The $q$-modulus of $\Gamma_A^{*\delta}$ is finite by Proposition \ref{lemma:taikalemma}. Let $\rho$ be the unique weak minimizer of $\modd_{q}\Gamma_A^{*\delta}$ given by $ii)$ of Lemma \ref{lemma:modulilemma}. We may assume that $\rho$ vanishes in $N_{10\delta}(A)$ and is defined as zero outside $Q$. Let $g_r$ be the smooth convolution
\[
g_r(x):=\int_{r\B^n}\rho^{q-1}(x+y)\phi_r(y)\, d\Ha^n(y).
\]
Let $S\in\Gamma_A^\delta$ and let $\varepsilon<\delta$. Proposition \ref{lemma:taikalemma} and $ii)$ of Lemma \ref{lemma:modulilemma} imply 
\[
\modd_{q}\Gamma_A^{*\delta}\leq\int_Q\phi^{S_z}_\varepsilon\rho^{q-1}\, d\Ha^n
\]
for all $|z|<\delta$ and $S\in\Gamma_A^\delta$. Note that the product $\phi_\varepsilon^{S_z}\rho^{q-1}$ vanishes in $N_{10\delta}(\partial Q)$, so by \eqref{eq:zsiirto} and a change of variables
\[
\modd_q\Gamma_A^{*\delta}\leq \int_Q\phi_\varepsilon^S(x)\rho^{q-1}(x+z)\, d\Ha^n(x)
\]
for all $|z|<\delta$. Multiplying both sides by $\phi_r(z)$ and integrating over $z$ yields 
\[
\modd_q\Gamma_A^{*\delta}\leq\int_Q\phi_\varepsilon^Sg_r\, d\Ha^n
\]
by Fubini's theorem. Letting $\varepsilon\rightarrow 0$ and then $r\rightarrow 0$ yields
\begin{align*}
\modd_{q}\Gamma_A^{*\delta}\leq\int_S\rho^{q-1}\, d\Ha^k
\end{align*} 
for $\modd_p$-almost every $S\in\Gamma_A^\delta$ by \eqref{eq:konvoluutio} and $i)$ of Lemma \ref{lemma:modulilemma}. Thus 
\[
\frac{1}{\modd_{q}\Gamma_A^{*\delta}}\rho^{q-1}
\]
is weakly admissible for $\Gamma_A^\delta$, so 
\[
\modd_p\Gamma_A^\delta\leq (\modd_{q}\Gamma_A^{*\delta})^{1-p},
\]
which is a rearrangement of \eqref{eq:roe1}.
\end{proof}
\subsection{Topological lemmas}\label{subsec:topology}
In this subsection we complete the proof of Theorem \ref{thm:main} by proving Proposition \ref{prop:leikkaus} and showing that $\Gamma_B\subset\Gamma_A^*$. These are implied by the following two lemmas.
\begin{lemma}\label{lemma:siirtolemma}
Suppose $S\in\Gamma_A^\delta$ and $|y|<10\delta$. Then there exists a singular relative cycle $\sigma_y$, such that it generates $H_k(Q, A)$ and its image coincides with $S_y$ outside $N_{100\delta}(A)$.
\end{lemma}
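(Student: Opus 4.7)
The plan is to represent $S$ by a Lipschitz relative cycle, translate that cycle by $y$, and then correct the translate back into $(Q,A)$ by post-composing with a Lipschitz retraction of a suitable neighborhood pair. By the definition of $\Gamma_A^\delta$ and Lemma~\ref{lemma:liphomo}, I can pick $\sigma\in C_k^L(Q)$ with $\partial\sigma\in C_{k-1}^L(A)$, $|\sigma|=S$, and $[\sigma]$ a generator of $H_k^L(Q,A)\cong H_k(Q,A)$. Writing $\tau_y\sigma$ for the chain obtained by post-composing every simplex of $\sigma$ with the translation $x\mapsto x+y$, its image equals $S_y$, but it takes values in $Q_y$ rather than $Q$, and its boundary lies in $A_y\subset N_{|y|}(A)$, so it is not yet a relative cycle of $(Q,A)$.

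Because $A$ and $Q$ are both locally Lipschitz neighborhood retracts, the pair $(Q,A)$ admits a Lipschitz pair neighborhood retraction: an open $V\supset Q$, an open $V_A\subset V$ with $N_{10\delta}(A)\subset V_A\subset N_{100\delta}(A)$, and a Lipschitz map $r\colon V\to Q$ with $r|_Q=\id$ and $r(V_A)\subset A$. (This follows by a standard patching of the individual retractions of $A$ and $Q$; it is the principal technical point.) Taking $\delta$ small enough so that $Q_y\subset V$ and $A_y\subset V_A$, I set
\[
\sigma_y := r_*\tau_y\sigma.
\]
Then $\partial\sigma_y=r_*\partial\tau_y\sigma$ has image in $r(V_A)\subset A$, so $\sigma_y$ is a Lipschitz, hence singular, relative cycle of $(Q,A)$.

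To identify its homology class, I apply the homotopy formula \eqref{eq:homotopyformula} to the Lipschitz homotopy $H\colon Q\times I\to Q$, $H(x,t)=r(x+ty)$. This is well-defined because $x+ty\in V$ for $t\in[0,1]$, and it carries $A\times I$ into $A$ because $A+ty\subset V_A$. Since $H_0=\id_Q$ and $H_{1*}\sigma=\sigma_y$, the formula produces operators $P$ with
\[
\sigma_y-\sigma=\partial P\sigma+P\partial\sigma,
\]
and $P\partial\sigma\in C_k(A)$ because $H$ restricts to a homotopy on $A$. Hence $[\sigma_y]=[\sigma]$ in $H_k(Q,A)$, and $\sigma_y$ generates.

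For the image condition, $|\sigma_y|=r(S_y)$ and the retraction $r$ is the identity off $V_A\subset N_{100\delta}(A)$. Any point of $S_y\setminus Q$ lies within $|y|<10\delta$ of $\partial Q=A\cup B$, and since $\dist(S_y,B)\geq\dist(S,B)-|y|>90\delta$, any such point must in fact lie in $N_{10\delta}(A)\subset N_{100\delta}(A)$. Consequently $S_y\setminus N_{100\delta}(A)\subset Q$ and coincides with $r(S_y)\setminus N_{100\delta}(A)$; for the reverse containment, any $s\in S_y$ with $r(s)\notin N_{100\delta}(A)$ cannot lie in $V_A$ (else $r(s)\in A$), so $s\in Q$ and $r(s)=s\in S_y$. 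This yields $|\sigma_y|\setminus N_{100\delta}(A)=S_y\setminus N_{100\delta}(A)$, completing the proof.
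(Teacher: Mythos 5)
There is a genuine gap, and it is fatal to the strategy rather than a fixable technicality. The ``Lipschitz pair neighborhood retraction'' you invoke cannot exist with the properties you require. You ask for $r\colon V\to Q$ with $r|_Q=\id$ and $r(V_A)\subset A$ where $N_{10\delta}(A)\subset V_A$. But $N_{10\delta}(A)$ contains points of the interior of $Q$ (any interior point within distance $10\delta$ of $A$); for such a point $x$ the first condition forces $r(x)=x$ and the second forces $r(x)\in A$, a contradiction. The same problem kills the weaker property you actually use: you need $r(A_y)\subset A$ so that $\partial\sigma_y=r_*\tau_y\partial\sigma$ lands in $C_{k-1}(A)$, and you need $r(A+ty)\subset A$ for the homotopy $H(x,t)=r(x+ty)$ to carry $A\times I$ into $A$. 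Since the translate $A_y$ generically meets $\mathrm{int}(Q)$, where $r$ must be the identity, $r(A_y)\not\subset A$. In short: no map can simultaneously fix $Q$ pointwise and push a translated copy of $A$ back onto $A$. (A smaller issue: you also ``take $\delta$ small enough,'' but $\delta$ is given in the lemma and cannot be chosen.)

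The paper's proof avoids this precisely by \emph{not} translating the part of the cycle near $A$. After enough barycentric subdivisions one writes $\sigma=\sigma_1+\sigma_2$ with $|\sigma_1|\subset N_{30\delta}(A)$ and $|\sigma_2|\subset Q-N_{20\delta}(\partial Q)$, translates only $\sigma_2$ (which stays inside $Q$ since $|y|<10\delta$), and glues the two pieces with the prism chain $P(\partial\sigma_2)$ from the homotopy formula, setting $\sigma_y=\sigma_1-P(\partial\sigma_2)+H_{1*}\sigma_2$. One checks $\partial\sigma_y=\partial\sigma\in C_{k-1}(A)$ and $\sigma_y-\sigma=\partial P\sigma_2$, so no retraction of any neighborhood of $A$ is ever needed; the price is that $|\sigma_y|$ only agrees with $S_y$ away from $N_{100\delta}(A)$, which is exactly what the lemma claims. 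If you want to salvage your write-up, replace the pair-retraction step with this decomposition; the rest of your homotopy-formula reasoning and your analysis of $S_y\setminus Q$ then goes through.
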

\begin{lemma}\label{lemma:leikkauslemma}
Suppose $\sigma_A$ and $\sigma_B$ are relative singular chains that generate nontrivial elements of $H_k(Q, A)$ and $H_{n-k}(Q, B)$, respectively. Then $|\sigma_A|\cap|\sigma_B|$ is nonempty.
\end{lemma}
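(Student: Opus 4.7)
The plan is to reduce to the standard cube via $h$ and then invoke the intersection pairing coming from Poincar\'e-Lefschetz duality. Since $h$ is a homeomorphism sending $(A,B)$ to $(\partial I^k\times I^{n-k},\,I^k\times\partial I^{n-k})$, and since $|h_*\sigma|=h(|\sigma|)$ for any singular chain $\sigma$, both the homological nontriviality of $[\sigma_A],[\sigma_B]$ and the emptiness of $|\sigma_A|\cap|\sigma_B|$ are preserved under pushforward. I therefore assume $(Q,A,B)$ is the standard triple. In this model, the standard generators of $H_k(Q,A)\cong\Z$ and $H_{n-k}(Q,B)\cong\Z$ are the affine slices $\alpha_0=I^k\times\{p_0\}$ and $\beta_0=\{q_0\}\times I^{n-k}$ for interior points $p_0\in\mathrm{int}(I^{n-k})$, $q_0\in\mathrm{int}(I^k)$, which meet transversely at the single point $(q_0,p_0)$.

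Next, $(Q,\partial Q)=(I^n,A\cup B)$ is a topological $n$-manifold-with-corners whose boundary splits as $\partial Q=A\cup B$ with $A\cap B=\partial A=\partial B$. Poincar\'e-Lefschetz duality with this split boundary provides, via cap product with the fundamental class $\mu\in H_n(Q,\partial Q)\cong\Z$, isomorphisms $H^{n-k}(Q,B)\xrightarrow{\cong}H_k(Q,A)$ and $H^{k}(Q,A)\xrightarrow{\cong}H_{n-k}(Q,B)$. Composing with the Kronecker pairing yields a nondegenerate intersection pairing
\[
\langle\cdot,\cdot\rangle \colon H_k(Q,A)\otimes H_{n-k}(Q,B)\to\Z.
\]
Using K\"unneth together with the product decomposition $(I^n,\partial I^n)=(I^k,\partial I^k)\times(I^{n-k},\partial I^{n-k})$, this pairing takes the value $\pm 1$ on $(\alpha_0,\beta_0)$: under the decomposition, $\alpha_0$ corresponds to $[I^k,\partial I^k]\otimes[p_0]$ and $\beta_0$ to $[q_0]\otimes[I^{n-k},\partial I^{n-k}]$, and their intersection factors as a product of two one-point pairings. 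Since $[\sigma_A]$ and $[\sigma_B]$ are nontrivial in $\Z$, they are nonzero integer multiples of $\alpha_0$ and $\beta_0$, and hence $\langle[\sigma_A],[\sigma_B]\rangle\ne 0$.

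To conclude, I would establish the complementary locality of the pairing: if $|\sigma_A|\cap|\sigma_B|=\emptyset$, then $\langle[\sigma_A],[\sigma_B]\rangle=0$, contradicting the preceding paragraph. By compactness choose disjoint open neighborhoods $U\supset|\sigma_A|$ and $V\supset|\sigma_B|$. The chain $\sigma_B$ is then already a relative cycle in $(Q\setminus\overline U,\,B\setminus\overline U)$, whose image in $H_{n-k}(Q,B)$ is $[\sigma_B]$. By naturality of Lefschetz duality with respect to codimension-zero open inclusions, the intersection pairing can be recovered by evaluating a cocycle representative of the Lefschetz dual of $[\sigma_A]$ on $\sigma_B$; and such a cocycle may be chosen with support contained in $U$, whence the evaluation, and therefore $\langle[\sigma_A],[\sigma_B]\rangle$, is zero.

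The hard part is this last assertion: the Poincar\'e-Lefschetz dual of a homology class represented by a singular chain with compact image $K$ admits a cocycle representative supported in any preassigned neighborhood of $K$. This is routine for smooth or cellular representatives, but requires extra care for an arbitrary singular chain, whose image may be topologically pathological. A clean workaround is to first apply Lemma \ref{lemma:liphomo} to replace $\sigma_A$ with a Lipschitz representative, whose image is a finite union of Lipschitz simplex images and hence amenable to Alexander- or \v{C}ech-type duality; alternatively, one can bypass the cocycle-support step entirely by constructing, via a deformation-and-excision argument on the pair $(Q\setminus\overline U,\,B\setminus\overline U)$ exploiting the product structure of $I^n$, a nullhomology of $\sigma_B$ in $(Q,B)$ directly.
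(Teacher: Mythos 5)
Your strategy---reduce to the standard cube via $h$, set up an integer intersection pairing $H_k(Q,A)\otimes H_{n-k}(Q,B)\to\Z$, show it takes the value $\pm 1$ on the standard slices $I^k\times\{p_0\}$ and $\{q_0\}\times I^{n-k}$, and derive a contradiction from locality---is essentially the strategy of the paper. But the step you yourself flag as ``the hard part,'' namely that disjointness of $|\sigma_A|$ and $|\sigma_B|$ forces the pairing to vanish, is exactly the step your write-up leaves unproven. Producing a cocycle representative of the Lefschetz dual supported in a prescribed neighborhood of the image of an arbitrary singular chain is not routine, and the workarounds you sketch (Lipschitz representatives plus Alexander--\v{C}ech duality, or a direct nullhomology via deformation and excision) are only named, not carried out. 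As it stands the argument is incomplete at its crux.

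The paper closes precisely this gap by working with Dold's intersection-number formalism rather than a cap-product/Kronecker pairing. There the pairing is defined by forming the outer product $[\sigma_A]\times[\sigma_B]\in H_n(Q\times Q,\ A\times Q\cup Q\times B)$ and pushing it forward with the difference map $u(x,y)=x-y$ into $H_n(\R^n,\R^n-\{0\})\simeq\Z$. With this definition locality is automatic: if $|\sigma_A|\cap|\sigma_B|=\emptyset$, the product class is carried by a set disjoint from the diagonal, so $u$ maps its carrier into $\R^n-\{0\}$ and the intersection number is zero (Propositions 4.5 and 4.6 of \cite[VII]{DoldAT}); the value $\pm1$ on the standard slices is Example 4.10 there. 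One further small point you omit: to define the intersection number one must first perturb $\sigma_A$ and $\sigma_B$ so that $|\sigma_A|\cap B=\emptyset=|\sigma_B|\cap A$, which the paper does explicitly. If you wish to complete your own route, the cleanest repair is to replace the cocycle-support argument by this outer-product definition of the pairing, after which the disjointness implication becomes a one-line consequence of functoriality.
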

\begin{proof}[Proof of Lemma \ref{lemma:siirtolemma}]
The lemma follows from the homotopy formula \eqref{eq:homotopyformula}. By the definition of $\Gamma_A$ there is a relative cycle $\sigma$ that generates $H_k(Q, A)$ and has $S$ as its image. By applying barycentric subdivision multiple times, if necessary, we may assume that $\sigma$ splits into $\sigma=\sigma_1+\sigma_2$, where $|\sigma_1|\subset N_{30\delta}(A)$ and $|\sigma_2|\subset Q-N_{20\delta}(\partial Q)$. Let $H_t$ be the homotopy $H_t(x)=x+ty$. Then by \eqref{eq:homotopyformula} there exist homomorphisms $P: C_l(U)\rightarrow C_{l+1}(U_y)$ for all $l$ and all open sets $U\subset\R^n$, such that
\begin{equation}\label{eq:homotopia}
H_{1*}-H_{0*}=\partial P+P\partial.
\end{equation}
Note that $P(\partial\sigma_2)$ and $H_{1*}\sigma_2$ are chains in $Q-N_{10\delta}(\partial Q)$. We let $\sigma_y=\sigma_1-P(\partial \sigma_2)+H_{1*}\sigma_2$. Then $\sigma_y-\sigma=\partial P\sigma_2$ by \eqref{eq:homotopia}, so $\sigma_y$ belongs to the same relative homology class as $\sigma$. To prove the final part of the lemma, note that $|\partial\sigma_2|\subset N_{30\delta}(A)$, since $|\partial\sigma_2|=|\partial\sigma_1|\cap \mathrm{int}(Q)$. Thus $|P(\partial\sigma_2)|\subset N_{40\delta}(A)$ and $|\sigma_y|$, $|H_{1*}\sigma_2|=|\sigma_2|_y$ and $S_y$ all coincide outside $N_{100\delta}(A)$.  
\end{proof}
\begin{proof}[Proof of Lemma \ref{lemma:leikkauslemma}]
The lemma follows from the theory of intersection numbers developed in \cite{DoldAT}. We may assume that $Q=J^n$, where $J=[-1, 1]$, and respectively $A=\partial J^k\times J^{n-k}$ and $B=J^k\times \partial J^{n-k}$.  Let $\sigma_A$ and $\sigma_B$ be representatives of some nontrivial classes of $H_k(Q, A)$ and $H_{n-k}(Q, B)$, respectively. Suppose $|\sigma_A|\cap|\sigma_B|=\emptyset$. Then we can deform $\sigma_A$ and $\sigma_B$ slightly, if necessary, and assume that $|\sigma_A|\cap B=\emptyset=|\sigma_B|\cap A$. This allows us to define the \emph{intersection number} $[\sigma_A]\circ [\sigma_B]\in H_n(\R^n, \R^n-\{0\})\simeq \Z$ of the classes $[\sigma_A]$ and $[\sigma_B]$, as in \cite[VII.4]{DoldAT}. 

The intersection number of the two classes is defined (up to sign) by pushing the outer product 
\[
[\sigma_A]\times[\sigma_B]\in H_n(Q\times Q, A\times Q\cup Q\times B)
\]
forward with the map $u(x, y)=x-y$. Notice the analogy with the proof of Proposition \ref{lemma:taikalemma}. We do not describe the definition of the outer product here, as it is rather complicated and would take us too far away from the main topic.

Let us compute the intersection number by using two different pairs of representatives for $[\sigma_A]$ and $[\sigma_B]$. On one hand, since the images of the representatives $\sigma_A$ and $\sigma_B$ do not intersect, Propositions 4.5 and 4.6 of \cite[VII]{DoldAT} imply that $[\sigma_A]\circ[\sigma_B]=0$. On the other hand, $[\sigma_A]$ and $[\sigma_B]$ admit representatives that are integer multiples of triangulations of the subspaces $J^k\times \{0\}$ and $\{0\}\times J^{n-k}$, so combining Proposition 4.5 and Example 4.10 of \cite[VII]{DoldAT} shows that $[\sigma_A]\circ[\sigma_B]$ is nontrivial.
\end{proof}

\section{Lower bound and related open problems}\label{section:remarks}
Theorems \ref{thm:main} and \ref{thm:main2} raise the question:
\begin{question}\label{q:alaraja}
Do the lower bounds
\begin{equation}\label{eq:alaraja}
1\leq (\modd_p\Gamma_A)^{1/p}(\modd_{q}\Gamma_B)^{1/q}
\end{equation}
or 
\begin{equation}\label{eq:alaraja2}
1\leq (\modd_p\Gamma_A)^{1/p}(\modd_{q}\Gamma_A^*)^{1/q}
\end{equation}
hold whenever $Q, A$ and $B$ are as in Theorem \ref{thm:main}? 
\end{question}
Since $\Gamma_B\subset\Gamma_A^*$, \eqref{eq:alaraja} implies \eqref{eq:alaraja2}. All existing proofs, save the one in \cite{FreedmanHe1991}, of such lower bounds rely on some variation of the coarea formula, Lemma \ref{lemma:coarea}.

In \cite{FreedmanHe1991} a lower bound is proved for de Rham cohomology classes. Hence it may be possible to answer Question \ref{q:alaraja} by finding a connection between the moduli of $\Gamma_A$ and $\Gamma_B$, which can be seen as moduli of homology classes, and the moduli of suitable cohomology classes. This is of course easier said than done. For instance, it is not very clear what ``suitable cohomology'' should mean, when $Q$ is nonsmooth. It seems these kinds of questions are still largely unexplored.

Let us sketch a proof \eqref{eq:alaraja2} in the special case $k=1$. Then $A$ consists of two opposite faces $A_0$ and $A_1$ of $Q$ and, recalling the notation from the introduction, 
\[
\modd_p\Gamma_A=\modd_p\Gamma(A_0, A_1; Q).
\]
Moreover, by \cite{Shlyk1993}
\begin{equation}\label{eq:capacity}
\modd_p\Gamma(A_0, A_1; Q)=\capp_p\Gamma(A_0, A_1; Q),
\end{equation}
where the (Lipschitz) \emph{capacity} is defined by 
\[
\capp_p\Gamma(A_0, A_1; Q):=\inf_u\int_Q|\nabla u|^p\, d\Ha^n,
\]
and the infimum is taken over Lipschitz functions $u: Q\rightarrow I$ with $u|_{A_0}=0$ and $u|_{A_1}=1$. Then by the coarea formula
\[
1\leq\int_I\int_{u^{-1}(t)}\rho\, d\Ha^{n-1}dt=\int_Q\rho|\nabla u|\, d\Ha^n
\]
for any integrable $\rho$ admissible for $\Gamma_A^*$, since by \cite[3.2.15]{GMT} almost every level set $u^{-1}(t)$ is an element of $\Gamma_A^*$. Now the lower bound \eqref{eq:alaraja2} follows from H\"older's inequality and \eqref{eq:capacity}.

Similar ideas can be used to prove that Theorems \ref{thm:main} and \ref{thm:main2} are sharp for any $n$ and $k$. Let us show that \eqref{eq:alaraja} holds whenever $Q=Q_1\times Q_2$, where $Q_1\subset\R^k$ and $Q_2\subset\R^{n-k}$ are $k$- and ($n-k$)-dimensional topological cubes as in Theorem \ref{thm:main}, $A=\partial Q_1\times Q_2$ and $B=Q_1\times \partial Q_2$. Then it suffices to show that
\[
\modd_p\Gamma_A=\frac{\Ha^{n-k}(Q_2)}{\Ha^{k}(Q_1)^{p-1}}\,\text{ and }\,\modd_q\Gamma_B=\frac{\Ha^{k}(Q_1)}{\Ha^{n-k}(Q_2)^{q-1}}.
\]
The proofs of the two formulas are identical, so we only consider $\Gamma_A$. For every $y\in Q_2$ and $\rho$ admissible for $\Gamma_A$ 
\[
1\leq\int_{Q_1\times\{y\}}\rho\, d\Ha^{k},
\]
so by H\"older's inequality
\[
1\leq\left(\int_{Q_1\times\{y\}}\rho^p\, d\Ha^{k}\right)^{1/p}\Ha^{k}(Q_1)^{1/q},
\]
from which we obtain the inequality "$\geq$" by integrating over $y$ and applying Fubini's theorem (or the coarea formula applied on the projection $\pi_2(x, y)=y$). The reverse inequality follows from the observation that $\Ha^{k}(Q_1)^{-1}\chi_Q$ is admissible for $\Gamma_A$. 

It is also noteworthy that in this case $\modd_q\Gamma_B=\modd_q\Gamma_A^*$, and both are equal to the $q$-modulus of the slices $\{x\}\times Q_2$. 

Observe that if we let $\lambda=\Ha^k(Q_1)^{-1/k}$ and use a scaled projection map $\lambda\pi_1(x, y)=\lambda x$ instead, we find that $\Ha^k(\lambda \pi_1(Q_1\times Q_2))=1$ and $J_{\lambda\pi_1}=\Ha^k(Q_1)^{-1}\chi_Q$. That is, the minimizer of $\modd_p\Gamma_A$ is the jacobian of $\lambda\pi_1$. Moreover, the level sets of $\lambda\pi_1$ are elements of $\Gamma_B$.

Inspired by this example we extend the definition of the capacity to general $Q$ and $A$ by
\[
\capp_p\Gamma_A:=\inf_u\int_QJ_u^p\, d\Ha^n,
\]
where the infimum is taken over all such Lipschitz maps $u: (Q, A)\rightarrow (\bar U, \partial U)$, that $U$ is a domain in $\R^k$ normalized with $\Ha^k(U)=1$, $(\bar U, \partial U)$ is homeomorphic to $(\bar \B^k, \partial\B^k)$, and the induced homomorphism
\begin{equation}\label{eq:isomorfismi}
u_*: H_k(Q, A)\rightarrow H_k(\bar U, \partial U)\simeq\Z
\end{equation}
is an isomorphism. We observe that $U\subset u(S)$ for any $S\in\Gamma_A$, so almost every level set of $u$ is in $\Gamma_A^*$, since $H_k(\bar U-\{x\}, \partial U)$ is trivial for all $x\in U$. Moreover, the Cauchy-Binet formula implies that $J_u\geq J_u^S$, so 
\[
\int_SJ_u\, d\Ha^k\geq\int_S J_u^S\, d\Ha^k\geq\int_U\, d\Ha^k=1
\]
by Lemma \ref{lemma:coarea}. Thus $J_u$ is admissible for $\Gamma_A$ and
\[
\modd_p\Gamma_A\leq \capp_p\Gamma_A.
\]
It is unknown whether the reverse inequality is true, but it would imply \eqref{eq:alaraja2}. To prove the reverse inequality one would have to be able to construct the required Lipschitz maps $u$. This seems to be very difficult when $k>1$, especially with a given $J_u$. If $k=1$, the situation is considerably simpler, since then $J_u=|\nabla u|$ and the unit interval $I$ is practically the only choice of $U$.
\bibliographystyle{plain}
\bibliography{bibbi}
\vspace{1em}
\noindent
Department of Mathematics and Statistics, University of Jyv\"askyl\"a, P.O.
Box 35 (MaD), FI-40014, University of Jyv\"askyl\"a, Finland.\\

\emph{E-mail:} \settowidth{\hangindent}{\emph{aaaaaaaaa}} \textbf{atte.s.lohvansuu@jyu.fi} 
\end{document}